\documentclass[12pt]{amsart}

\usepackage{stmaryrd}

\usepackage{amsfonts,xcolor}
\usepackage{latexsym,amssymb}
\usepackage{amsmath}
\usepackage[utf8]{inputenc}

\newtheorem{theorem}{Theorem}[section]
\newtheorem{lemma}[theorem]{Lemma}
\newtheorem{proposition}[theorem]{Proposition}

\newtheorem{remark}[theorem]{Remark}
\newtheorem{definition}[theorem]{Definition}
\newtheorem{thmx}{Theorem}
 % "letter-numbered" theorems

\newcommand{\R}{\mathbb{R}}

\newcommand{\N}{\mathbb{N}}
\newcommand{\di}{\, \mathrm{d}}
\newcommand{\csch}{\mathrm{csch}}

\usepackage[colorlinks, linkcolor=blue, citecolor=red, breaklinks]{hyperref}

\catcode`@=11 \@addtoreset{equation}{section} \catcode`@=12

\pretolerance=10000

\usepackage[pagewise]{lineno}

\usepackage[left=3cm,right=3cm,top=3cm,bottom=3cm]{geometry}

\begin{document}
	
\title[Heat equation involving a Grushin operator]{On the Heat equation involving a Grushin operator in Marcinkiewicz spaces}

\author[A. E. Kogoj]{Alessia E. Kogoj}
\address{Università degli Studi di Urbino Carlo Bo, Dipartimento di Scienze Pure ed Applicate, Urbino-PU, Italy}
\email{alessia.kogoj@uniurb.it}\thanks{}

\author[M. E. Lima]{Maria E. Lima}
\address{Universidade Estadual de Campinas, IMMEC, Campinas-SP, Brazil}
\email{m227985@dac.unicamp.br}\thanks{}

\author[A. Viana]{Arl\'ucio Viana}
\address{Universidade Federal de Sergipe, Departamento de Matem\'atica, 49100-000 São Cristóvão-SE, Brazil}
\email{arlucioviana@academico.ufs.br}\thanks{*}

\keywords{Subelliptic operators, well-posedness of PDEs, Grushin operator, self-similar solutions}

\subjclass[2020]{35H20, 35B44, 47D06, 35K58 ,35B60, 35B09}

\begin{abstract} 
	
	In this work, we give sufficient conditions for the existence and uniqueness of the heat equation involving the operator	
	$$	
	\Delta_{\mathcal{G}}=\dfrac{1}{2}\left(\Delta_{x}+|x|^2\Delta_{y}\right)
	$$
	in Marcinkiewicz spaces. Furthermore, we provide sufficient conditions for the existence of positive, symmetric, and self-similar solutions. 
	
\end{abstract}

\maketitle

\section{Introduction and main results}

Since Fujita's paper \cite{Fujita}, the semilinear heat equation
\begin{eqnarray}\label{sheat}
	u_t(x,t) = \Delta u(x,t) + |u(x,t)|^{\rho-1}u(x,t),\  \mbox{in}\ (0,\infty)\times \R^N ,\\
	u(x,0)=u_0(x),\ \mbox{in}\ \R^N , \label{sheat0}
\end{eqnarray} 
$\rho>1$, is of great mathematical interest.

It is well-known that if $u_0\in C_0(\R^N)$ be nonnegative and nonzero, then
\begin{enumerate}
	\item If $1<\rho<1+\frac{2}{ N}$, there exists no positive global solution of \eqref{sheat}--\eqref{sheat0}.
	\item If $\rho> 1+\frac{2}{ N}$, there exists $u_0\in L^{\frac{ N}{2}(\rho-1)}(\R^N)$ such that there exists a global positive solution of \eqref{sheat}--\eqref{sheat0}.
\end{enumerate}
The critical case $\rho = 1+\frac{2}{N}$ was resolved by Weissler \cite{Weissler}. Initial conditions in Lebesgue spaces were considered in \cite{BrezisCaz,Weissler}. 

In this work, we will replace the classical Laplacian with the operator 
\begin{equation}\label{Grushiop}
	\Delta_{\mathcal{G}}=\dfrac{1}{2}\left(\Delta_{x}+|x|^2\Delta_{y}\right), 
\end{equation}
where $\Delta_x, \Delta_y$ denote the classical Laplacian in the variables $x\in\R^N$ and $y\in\R^k$, respectively.

This operator, nowadays called Grushin-type (see \cite{Grushin, grushin_1971}), actually belongs to the general class of operators studied by H\"ormander in \cite{hormander} and is hypoelliptic. Moreover, it is also a particular case of the degenerate elliptic operators studied in  \cite{Kogoj-Lan-12} by Kogoj and Lanconelli.  The first  to introduce and study a metric and a consequent underlying sub-Riemannian structure for such operators were Franchi and Lanconelli in their seminal papers in the early 1980s 
\cite{franchi_lanconelli_1, franchi_lanconelli_2, franchi_lanconelli_3}.
In recent years, many works have appeared in the literature dealing with second-order linear and semilinear degenerate elliptic PDOs, which fall into this class. We refer to  \cite{nina, Ba-Furutani-15, D-S-Zhu-22,Kogoj-Lan-18,La-Lu-Mu-21} and the references therein for recent results involving the Grushin-type operators.

We are dealing with the following problem 

\begin{eqnarray}\label{sheatG}
	u_t = \Delta_\mathcal{G} u + |u|^{\rho-1}u,\  \mbox{in}\ (0,\infty)\times \R^{N+k} ,\\ \nonumber
	u(0)=u_0,\ \mbox{in}\ \R^{N+k}. \label{sheatG0}
\end{eqnarray}

In bounded domains, we can find well-posedness, long-time dynamics, and the existence of attractors for semilinear equations involving degenerate elliptic operators containing the Grushin ones in \cite{Kogoj-Sonner-13,Kogoj-Sonner-14,Liu-Tian-24}. In particular, the authors of \cite{Liu-Tian-24} applied the Galerkin method to prove the existence of solutions of parabolic and pseudo-parabolic equations associated with Hörmander-type operators, where they used Sobolev-type estimates, which are available for bounded domains. They also proved the existence and upper continuity of attractors.

In \cite{Oli-Vi-23}, Oliveira and Viana prove existence, uniqueness, continuous dependence and blowup alternative of local mild solutions for \eqref{sheatG}--\eqref{sheatG0} with initial conditions in Lebesgue spaces. Also, they obtain the existence of global solutions in the special case of $u_0\in L^{\frac{ N}{2}(\rho-1)}(\R^{N+k})$ with a sufficiently small norm. 

%{\color{red} This is also the case in \cite{Bi-Ko-So}.}
%
In this manuscript, by working in Marcinkiewicz spaces, we allow larger (in Lebesgue norm) initial data to be taken into account to obtain global solutions. More precisely, we give sufficient conditions for the existence and uniqueness of mild solutions for \eqref{sheatG}--\eqref{sheatG0}, with initial conditions in the critical Marcinkiewicz space $L^{(p,\infty)}(\R^{N+k})$, with 
\begin{equation}\label{p}
	p = \frac{N+2k}{2}(\rho-1) .
\end{equation}
Then, we prove the existence of positive, symmetric and self-similar solutions. Indeed, for example, if $u_0(x,y) = \varepsilon|x|^{-\frac{2}{\rho-1}} |y|^{-\frac{1}{\rho-1}}$ and $\varepsilon>0$ is sufficiently small, Theorems \ref{globale} (a) and \ref{self} gives the existence of a positive, self-similar and symmetric solution of \eqref{sheatG}--\eqref{sheatG0} in $L^\infty((0,\infty);L^{(p,\infty)})$. Notice that such an initial condition has an infinity $L^p(\R^{N+k})$-norm, so we cannot apply those results in \cite{Oli-Vi-23}. In other words, Marcinkiewicz spaces allow singular homogeneous initial conditions that generate positive, self-similar solutions. Moreover, Theorem \ref{globale} also gives sufficient conditions for uniqueness, regularity and time-decay of the mild solutions.

The number $N+2k$ is the so-called homogeneous dimension attached to the operator in \eqref{Grushiop}  (see \cite{Kogoj-Lan-18}). 

Here, we will rely on the \textit{explicit} expression of the heat kernel of the Grushin operator (see e.g. \cite{Oli-Vi-23, Garofalo-Trallli-22}):
\begin{equation}\label{HK}
	K(x, x_0,y;t) = \frac{1}{(2\pi)^{\frac{N+2k}{2}}} \int_{\R^k} \left(\frac{|\xi|}{\sinh(|\xi| t)}\right)^{\frac{N}{2}} e^{i\xi  \cdot y- \frac{|\xi|}{2}\left((|x|^2 + |x_0|^2)  \coth(|\xi| t) -2x\cdot x_0 \csch(|\xi|t) \right)} \di\xi, 
\end{equation}
for $(x, x_0,y) \in \R^{2N+k}, t>0.$

To prove this, the authors in \cite{Oli-Vi-23} followed the Geometric Method in \cite{Calin-11} to find the heat kernel of the heat equation with quadratic potential and then applied the partial Fourier transform on the variable $y$. Then, the inverse partial Fourier transform gave \eqref{HK}, an expression that also appears in Theorem 3.4 of the paper by Garofalo and Tralli \cite{Garofalo-Trallli-22}, with slight differences due to constant choices in the definition of the operator. A stochastic approach to study the kernel's short-time asymptotics in two-dimensions was given by Sowers \cite{sowers}. More recently, Stempak \cite{Stempak25} obtained that closed form for the kernel by applying the functional calculus of the Grushin operator and using the Hankel transform.

Let us recall some properties of the heat kernel and the heat semigroup in $L^p$ proved in \cite{Oli-Vi-23}. The kernel \eqref{HK} is $C^\infty$ and 
\begin{enumerate}
	\item $K_t = \Delta_\mathcal{G} K$;
	\item $\int_{\R^{N+k}} K(x,0,y;t) \di (x,y) =1$ ;
	\item $\lim_{t\rightarrow0^+}\int_{\R^{N+k}} K(x,w,y-z;t) \varphi(w,z) \di(w,z) = \varphi(x,y)$ .
\end{enumerate}

Moreover,
\begin{equation}\label{symmetry}
	K(x,x_0,y;t)= t^{-\frac{N+2k}{2}} K(t^{-\frac{1}{2}}x, t^{-\frac{1}{2}}x_0, t^{-1}y;1).
\end{equation} 
and the heat kernel \eqref{HK} is positive. In particular, $\|K(\cdot,0,\cdot;t)\|_{L^1(\R^(N+k)} = 1$.

The unique solution of the Cauchy problem
\begin{eqnarray}\label{cauchy}
	\partial_{t}u-\Delta_{\mathcal{G}}u=0, & x\in \R^{N+k}, \ t>0 \\
	\hspace*{-1cm}u(x,y,0)=u_0(x,y), & x\in \R^{N+k} ,
\end{eqnarray}
defines a strongly continuous semigroup in $L^p$. It is defined by
\begin{equation}\label{solinear}
	S_\mathcal{G}(t)\varphi(x,y) =	\int_{\R^{N+k}} K(x,w,y-z;t) \varphi(w,z) \di(w,z) .
\end{equation} 
Indeed, we have the following result proved in \cite{Oli-Vi-23}.

\begin{thmx}\label{lpest}
	For all $1\leq p \leq \infty$, $S_\mathcal{G}(t):L^p(\R^{N+k})\rightarrow L^p(\R^{N+k})$ is a semigroup. If $1\leq p \leq r\leq\infty$, then
	\begin{equation}\label{semigroup}
		\|S_\mathcal{G}(t)\varphi\|_{L^r(\R^{N+k})}\leq C\|\varphi\|_{L^p(\R^{N+k})}t^{-\frac{N+2k}{2}\left(\frac{1}{p}-\frac{1}{r}\right)};
	\end{equation}
	Furthermore, for $1 \leq p \leq r \leq \infty$, for all $t_0>0$, it is strongly continuous, that is, for $\varphi \in L^r(\R^{N+k})$, it holds
	\begin{equation}\label{continuidadeforte} 
		\|S_\mathcal{G}(t)\varphi-S_\mathcal{G}(t_0)\varphi\|_{L^p}\rightarrow 0,
	\end{equation}
	as $t\rightarrow t_0$. When $p=r<\infty$, then we may take $t_0=0$.
\end{thmx}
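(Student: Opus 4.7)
The plan is to prove the $L^p\to L^r$ bound by establishing two endpoint estimates and then interpolating, following the classical ultracontractivity strategy. First, I would verify that $S_\mathcal{G}(t)$ is a contraction on $L^1$ and on $L^\infty$. Using property (2) and the manifest symmetry $K(x,x_0,y;t)=K(x_0,x,y;t)$ that is read off the explicit kernel \eqref{HK}, together with translation invariance in $y$, one gets
\begin{equation*}
\int_{\R^{N+k}} K(x,w,y-z;t)\di(x,y)=1 \quad\text{and}\quad \int_{\R^{N+k}} K(x,w,y-z;t)\di(w,z)=1 ,
\end{equation*}
for every $w,z,t$ (resp. $x,y,t$). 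Since $K\geq 0$, these identities immediately give $\|S_\mathcal{G}(t)\varphi\|_{L^\infty}\leq \|\varphi\|_{L^\infty}$ and, by Fubini together with $|S_\mathcal{G}(t)\varphi|\leq S_\mathcal{G}(t)|\varphi|$, $\|S_\mathcal{G}(t)\varphi\|_{L^1}\leq \|\varphi\|_{L^1}$. Interpolating between these two bounds (Riesz--Thorin) yields the $L^p\to L^p$ contraction for every $1\leq p\leq\infty$, which also confirms the semigroup property in each $L^p$.

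Next I would prove the ultracontractive bound $L^1\to L^\infty$, i.e., $\sup_{x,x_0,y} K(x,x_0,y;t)\leq C\, t^{-(N+2k)/2}$. In view of the scaling relation \eqref{symmetry}, it suffices to show that $M:=\sup_{x,x_0,y} K(x,x_0,y;1)<\infty$. This follows by a direct estimate of the oscillatory integral \eqref{HK}: bounding the modulus of the integrand by $(|\xi|/\sinh|\xi|)^{N/2}$ (using that the real part of the exponent, $-\tfrac{|\xi|}{2}((|x|^2+|x_0|^2)\coth(|\xi|)-2x\cdot x_0\csch(|\xi|))$, is $\leq 0$ because the quadratic form in $(x,x_0)$ with matrix $\bigl(\begin{smallmatrix}\coth|\xi| & -\csch|\xi|\\ -\csch|\xi| & \coth|\xi|\end{smallmatrix}\bigr)$ is positive semidefinite), and noting that $(|\xi|/\sinh|\xi|)^{N/2}$ is integrable on $\R^k$ (exponential decay for $|\xi|\to\infty$, and for $|\xi|\to 0$ the factor is bounded provided $N\geq 1$). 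Combining this with scaling gives $\|S_\mathcal{G}(t)\varphi\|_{L^\infty}\leq C t^{-(N+2k)/2}\|\varphi\|_{L^1}$. Interpolating this endpoint with $\|S_\mathcal{G}(t)\|_{L^\infty\to L^\infty}\leq 1$ produces $\|S_\mathcal{G}(t)\varphi\|_{L^\infty}\leq C_p t^{-(N+2k)/(2p)}\|\varphi\|_{L^p}$, and then a second Riesz--Thorin interpolation against $\|S_\mathcal{G}(t)\|_{L^p\to L^p}\leq 1$ produces \eqref{semigroup} in the full range $1\leq p\leq r\leq\infty$.

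Finally, for the strong continuity on $(0,\infty)$ from $L^r$ to $L^p$ with $p\leq r$, I would use the semigroup identity $S_\mathcal{G}(t)\varphi-S_\mathcal{G}(t_0)\varphi = S_\mathcal{G}(\min\{t,t_0\})\bigl(S_\mathcal{G}(|t-t_0|)\varphi-\varphi\bigr)$, apply the just-proved bound \eqref{semigroup} to the outer semigroup (which provides a uniform constant in $t$ near $t_0>0$), and reduce the problem to proving $\|S_\mathcal{G}(h)\varphi-\varphi\|_{L^r}\to 0$ as $h\to 0^+$ for $\varphi\in L^r$. When $r<\infty$, this is established first for $\varphi\in C_c^\infty(\R^{N+k})$ using property (3) combined with the uniform $L^1$-normalisation of the kernel and dominated convergence, and then extended to all of $L^r$ by the density of $C_c^\infty$ and the $L^r$-contraction of $S_\mathcal{G}(h)$. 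The case $p=r=\infty$ at $t_0>0$ follows analogously but cannot be extended to $t_0=0$ because $C_c^\infty$ is not dense in $L^\infty$. I expect the main technical obstacle to be the uniform bound on $K(\cdot,\cdot,\cdot;1)$, which hinges on exploiting the nonpositivity of the real part of the phase in \eqref{HK} via the Cauchy--Schwarz/coercivity argument above; everything else reduces to standard semigroup bookkeeping once that estimate is in hand.
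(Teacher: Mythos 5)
The paper does not actually prove this statement: it is Theorem~\ref{lpest}, quoted verbatim from \cite{Oli-Vi-23}, so there is no in-paper proof to compare against. Your proposal is the standard ultracontractivity route (endpoint $L^1\to L^1$, $L^\infty\to L^\infty$, $L^1\to L^\infty$ bounds from the explicit kernel, then two Riesz--Thorin interpolations), and the core of it is sound: the quadratic form $\bigl(\begin{smallmatrix}\coth a & -\csch a\\ -\csch a & \coth a\end{smallmatrix}\bigr)$ is indeed positive semidefinite (eigenvalues $(\cosh a\pm 1)/\sinh a\ge 0$), $(|\xi|/\sinh|\xi|)^{N/2}$ is integrable on $\R^k$, and together with \eqref{symmetry} this gives $\sup K(\cdot,\cdot,\cdot;t)\le Ct^{-(N+2k)/2}$ and hence \eqref{semigroup} by interpolation with the exactly right exponent.

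Two points need repair. First, the double stochasticity of the kernel does not follow from property (2) plus the $x\leftrightarrow x_0$ symmetry and $y$-translation invariance alone: property (2) only normalises the mass for the base point $x_0=0$, and there is no translation invariance in the $x$-variable, so $\int K(x,w,y-z;t)\di(w,z)=1$ for \emph{general} $x$ requires an extra argument --- e.g.\ mass conservation ($\partial_t\int K=\int\Delta_\mathcal{G}K=0$ since $\Delta_\mathcal{G}$ is a sum of pure second derivatives), or the direct computation that integrating \eqref{HK} in $y$ produces $(2\pi t)^{-N/2}e^{-|x-x_0|^2/(2t)}$, whose $x_0$-integral is $1$. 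Relatedly, the semigroup law $S_\mathcal{G}(t)S_\mathcal{G}(s)=S_\mathcal{G}(t+s)$ is a Chapman--Kolmogorov identity for the kernel and is not ``confirmed'' by contraction bounds; it needs its own justification (uniqueness for the Cauchy problem \eqref{cauchy} or a direct kernel computation). Second, your strong-continuity reduction estimates the outer factor by an $L^r\to L^p$ bound with $p\le r$, which \eqref{semigroup} does not provide (the smoothing goes from the smaller exponent to the larger one, and $L^r\not\hookrightarrow L^p$ on $\R^{N+k}$); the argument works if you instead take $\varphi\in L^p$, prove $\|S_\mathcal{G}(h)\varphi-\varphi\|_{L^p}\to 0$ by density of $C_c^\infty$ in $L^p$, and let the outer semigroup carry the result into $L^r$ with the uniformly bounded constant $C\min\{t,t_0\}^{-\frac{N+2k}{2}(\frac1p-\frac1r)}$. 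This mismatch mirrors what appears to be a typo in the statement itself, but your write-up should not rely on the non-existent reverse mapping.
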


We mention that Metafune, Negro and Spina \cite{Me-Ne-Sp-20} have proved $L^p$ estimates for the Grushin operator and used the semigroup associated with it defined in a Sobolev-type space.

Now, we state our results. In the following, $X = L^\infty((0,\infty);L^{(p,\infty)}(\R^{N+k}))$ and a mild solution is a solution in $X$ that satisfies the integral equation associated with \eqref{sheatG}--\eqref{sheatG0} by means of the Duhamel principle. We refer to  Section \ref{main} for the precise definition of the mild solution. 

\begin{theorem}\label{globale}
	Let $1<p<\infty$ be given by \eqref{p}.
	\begin{enumerate}
		\item[(a)] \textbf{Well-posedness.} There exists $\delta_p>0$ sufficiently small such that , if $\|u_0\|_{(p,\infty)}<\delta_p$, then  the problem \eqref{sheatG}-\eqref{sheatG0} has a global mild solution $u \in X$ that is unique in $B_X(2\varepsilon)$ ($\varepsilon>0$ will be precise later). Furthermore, if $u,v\in$ are mild solutions of \eqref{sheatG}--\eqref{sheatG0} with initial conditions $u_0, v_0$ with $L^{(p,\infty)}$ norm less than $\delta_p$, respectively, then 
		\begin{equation}\label{contdep}
			\|u(t) - v(t)\|_{(p,\infty)} \leq \frac{1}{1-2^\rho K_p \varepsilon^{\rho-1}} \|u_0 - v_0\|_{(p,\infty)} 
		\end{equation}
		
		\item[(b)] \textbf{Regularity.} If $u_0\in L^{(p,\infty)}(\R^{N+k}) \cap L^{(q,\infty)}(\R^{N+k})$, $\|u_0\|_{(q,\infty)}<\delta_q$, with $\delta_q>0$ sufficiently small, for $q>\frac{N+2k}{N+2(k-1)}$, then $u \in L^\infty((0,\infty); L^{(p,\infty)}(\R^{N+k}) \cap L^{(q,\infty)}(\R^{N+k}))$. Moreover, for $p<r<q$, we have $u, \in L^\infty(0,\infty;L^r(\R^{N+k}))$.
		
		\item[(c)] \textbf{Decay.} Moreover, if $\rho p < r< q$, we have
		\begin{equation}\label{decay}
			\|u(t)\|_{(r,\infty)}\leq C t^{-\sigma}, \ t>0 ,
		\end{equation} 
		where $\sigma=\frac{N+2k}{2}\left(\frac{1}{p}-\frac{1}{r}\right)$.
		\item[(d)] \textbf{Uniqueness.} The solution in unique in the set $L^\infty((0,\infty); L^{(p,\infty)}(\R^{N+k}) \cap L^{(q,\infty)}(\R^{N+k}))$, for $1<p<q<\infty$.
	\end{enumerate}
\end{theorem}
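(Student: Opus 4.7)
My strategy throughout is the Banach fixed-point theorem applied to the Duhamel map
\[
  \Phi(u)(t) = S_\mathcal{G}(t) u_0 + \int_0^t S_\mathcal{G}(t-s) \bigl(|u(s)|^{\rho-1} u(s)\bigr) \di s,
\]
using Theorem~\ref{lpest} as the linear input. First I would extend the smoothing estimate \eqref{semigroup} to Marcinkiewicz spaces by real interpolation, obtaining
\[
  \|S_\mathcal{G}(t)\varphi\|_{(r,\infty)} \leq C\, t^{-\frac{N+2k}{2}(1/p - 1/r)}\,\|\varphi\|_{(p,\infty)}, \qquad 1<p\leq r\leq\infty,
\]
and record the Lorentz-H\"older/pointwise identity $\||u|^{\rho-1}u\|_{(p/\rho,\infty)} \leq \|u\|_{(p,\infty)}^\rho$ together with the algebraic Lipschitz inequality $\bigl||a|^{\rho-1}a - |b|^{\rho-1}b\bigr| \leq \rho(|a|^{\rho-1} + |b|^{\rho-1})|a-b|$. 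These are the three analytical ingredients driving the whole theorem.

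For part (a), the scaling-critical choice \eqref{p} forces a borderline $(t-s)^{-1}$ integrand in the Duhamel term when one estimates only in $L^{(p,\infty)}$. I would therefore run the contraction on the Kato-type space
\[
  Y \;=\; \Bigl\{u\in X\;:\;\sup_{t>0}\, t^{\sigma_0}\|u(t)\|_{(r_0,\infty)}<\infty\Bigr\}
\]
for an auxiliary exponent $r_0>p$ with $\sigma_0=\frac{N+2k}{2}(1/p-1/r_0)$, chosen so that the pair of smoothing estimates in $L^{(p,\infty)}$ and $L^{(r_0,\infty)}$ both close scale-invariantly; the time weight provides the missing Lebesgue integrability. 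For $\|u_0\|_{(p,\infty)}<\delta_p$ sufficiently small, this produces a unique fixed point in the ball of radius $2\varepsilon$ in $Y$, which a fortiori lies in $B_X(2\varepsilon)$, and the contraction constant $2^\rho K_p\varepsilon^{\rho-1}<1$ yields the Lipschitz bound \eqref{contdep}.

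Parts (b)--(c) follow by rerunning the fixed-point argument on $L^\infty((0,\infty); L^{(p,\infty)}\cap L^{(q,\infty)})$, invoking the uniqueness statement of (a) to identify the two fixed points; the embedding $L^{(p,\infty)}\cap L^{(q,\infty)}\hookrightarrow L^r$ for $p<r<q$ is the standard Lorentz-interpolation inclusion that delivers the $L^r$-regularity of (b). The time decay \eqref{decay} in (c) is then obtained by bootstrapping: splitting $(0,t)=(0,t/2)\cup(t/2,t)$, on the first interval I exploit $(t-s)^{-\alpha}\lesssim t^{-\alpha}$ together with the uniform $L^{(q,\infty)}$-bound, on the second I feed in the induction $\|u(s)\|_{(r,\infty)}\lesssim s^{-\sigma}$, and the hypothesis $\rho p<r$ is exactly the compatibility condition making the induction self-consistent. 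For (d), subtracting two mild solutions $u,v$ and using the Lipschitz version of the Lorentz-H\"older estimate gives
\[
\|u(t)-v(t)\|_{(p,\infty)} \leq C\int_0^t (t-s)^{-1}\|u(s)-v(s)\|_{(p,\infty)}\di s,
\]
from which, combined with the extra $L^{(q,\infty)}$-control used to tame the critical weight, a Gronwall-type inequality on $[0,T]$ and continuation force $u\equiv v$.

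The main obstacle is the \emph{critical} temporal exponent: the $(t-s)^{-1}$ integrand at the scaling index $p=\frac{N+2k}{2}(\rho-1)$ is not Lebesgue-integrable on $(0,t)$. The Kato device of enlarging the fixed-point space with an additional time-weighted Lorentz norm is what makes the argument close; this is exactly the structural feature of Marcinkiewicz spaces that allows homogeneous initial data whose $L^p(\R^{N+k})$ norm is infinite to be admitted here, in contrast to \cite{Oli-Vi-23}.
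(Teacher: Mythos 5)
Your overall architecture (contraction mapping for the Duhamel operator, real interpolation to transfer Theorem~\ref{lpest} to Lorentz spaces, a pointwise Lipschitz bound for $f(w)=|w|^{\rho-1}w$) matches the paper, but your mechanism for closing the critical estimate in part (a) is genuinely different. You absorb the non-integrable $(t-s)^{-1}$ weight by contracting in a Kato-type space $Y$ carrying an auxiliary norm $\sup_{t>0}t^{\sigma_0}\|u(t)\|_{(r_0,\infty)}$. The paper instead proves a Yamazaki-type inequality (Lemma~\ref{yam}), $\int_0^\infty t^{\frac{N+2k}{2}\left(\frac{1}{p}-\frac{1}{r}\right)-1}\|S_\mathcal{G}(t)\phi\|_{(r,1)}\,dt\le C\|\phi\|_{(p,1)}$, and estimates the Duhamel term by duality (Lemma~\ref{nonl}): writing $\|B(u)(t)\|_{(p,\infty)}=\sup_{\|\phi\|_{(p',1)}=1}|\langle B(u)(t),\phi\rangle|$, moving $S_\mathcal{G}$ onto $\phi$, and absorbing the critical time singularity into the $L^1$-in-time bound for $\|S_\mathcal{G}(s)\phi\|_{(\frac{p}{p-\rho},1)}$ (the relevant exponent is exactly $0$ at the critical index). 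This yields $\|B(u)\|_X\le K_p\|u\|_X^\rho$ directly in $X=L^\infty((0,\infty);L^{(p,\infty)})$, so Lemma~\ref{fixedpoint} is applied in $X$ itself with no time weight. Your route is the classical Weissler--Kato one and does close (choosing $\max(p,\rho)<r_0<\rho p$ makes both beta-function integrals converge, and the exponents sum to $1$ so the $L^{(p,\infty)}$ bound is recovered uniformly in $t$); the trade-off is that you avoid the duality/Yamazaki machinery, while the paper's argument runs in the larger, unweighted space. For (c) the paper likewise reruns the fixed point in a weighted space rather than your interval-splitting bootstrap, and for (b) it uses a Picard iteration made Cauchy in $L^{(q,\infty)}$; these are equivalent in substance.

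The one genuine gap is the uniqueness clause of (a). Your fixed point is unique only in the ball of radius $2\varepsilon$ of $Y$, i.e., among solutions that additionally satisfy the weighted bound; the theorem asserts uniqueness in $B_X(2\varepsilon)$, and ``a fortiori'' does not bridge this, since a competing mild solution in $B_X(2\varepsilon)$ need not belong to $Y$ at all and is therefore invisible to your contraction. You would need an extra step (e.g., showing that every mild solution in $B_X(2\varepsilon)$ automatically acquires the weighted decay, which is not obvious at the critical exponent), whereas the paper's $X$-contraction gives uniqueness in $B_X(2\varepsilon)$ and the estimate \eqref{contdep} for free. A smaller point: in (d) the displayed kernel $(t-s)^{-1}$ is not the one you want; placing the factor $|u|^{\rho-1}+|v|^{\rho-1}$ in $L^{(\frac{q}{\rho-1},\infty)}$ and the difference in $L^{(p,\infty)}$ gives the subcritical exponent $\frac{N+2k}{2}\cdot\frac{\rho-1}{q}=\frac{p}{q}<1$, after which the singular Gronwall lemma applies; your verbal description is consistent with this, but the formula as written would not close.
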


One key ingredient for the proof of Theorem \ref{globale} is a Yamazaki-type inequality (\cite{Yamazaki}): if $1<p<q<\infty$, there is a constant $C>0$ such that
\begin{equation}
	\int^{\infty}_0t^{\frac{n}{2}\left(\frac{1}{p}-\frac{1}{q}\right)-1}\|G(t)\phi\|_{(q,1)}ds\leq C\|\phi\|_{(p,1)}, 
\end{equation}
for each $\phi \in L^{(p,1)}(\mathbb{R}^n)$, in which $G(t)$ denotes the heat semigroup. Also, it was generalized by Ferreira and Villamizar-Roa (see \cite{FVR}) for the semigroup generated by the fractional Laplacian. We will need to use a similar inequality for the semigroup generated by the Grushin operator (see Lemma \ref{yam} below). The regularity relies on interpolation arguments and the smoothness of the kernel.

In the next theorem, we will use the following terminology. By a \textit{self-similar} solution, we mean a solution $u$ that satisfies the scaling map
\begin{equation}\label{scalingmap}
	u_{\lambda}(x,y,t):=\lambda^{\frac{2}{\rho-1}}u(\lambda x, \lambda^2 y, \lambda t).
\end{equation}
Let $\mathcal{A}$ be a subset of the orthogonal matrices in $\mathcal{O}(N)\times \mathcal{O}(k)$. We say that a function $\varphi$ is invariant under the action of $\mathcal{A}$ if $\varphi(Tz) = \varphi(z)$, for all $z\in\R^{N+k}$ and $T\in \mathcal{A}$.

\begin{theorem}\label{self}
	Let the assumptions of Theorem \ref{globale} hold. 
	\begin{enumerate}
		\item[(a)] If $u_0$ is a nonnegative function, then $u$ is positive.
		\item[(b)] If $u_0$ is a symmetric function under the action of $\mathcal{A}$, then $u$ also is.
		\item[(c)] If $u_0$ is a homogeneous function $-\frac{2}{\rho-1}$, then the solution $u$ given by Theorem \ref{globale} is self-similar.
	\end{enumerate} 
\end{theorem}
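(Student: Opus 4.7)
The plan is to obtain all three properties by working at the level of the Picard iteration that produces the mild solution of Theorem \ref{globale}. Setting
\begin{equation*}
u^{(0)}(t) = S_\mathcal{G}(t) u_0, \qquad u^{(n+1)}(t) = S_\mathcal{G}(t) u_0 + \int_0^t S_\mathcal{G}(t-s)\bigl(|u^{(n)}|^{\rho-1} u^{(n)}\bigr)(s)\di s,
\end{equation*}
the smallness of $\|u_0\|_{(p,\infty)}$ guarantees $u^{(n)}\to u$ in $X$, and after extracting a subsequence we may assume a.e.\ pointwise convergence. Consequently, it is enough to check that each structural property (nonnegativity, $\mathcal{A}$-symmetry, $u_\lambda$-invariance) is (i) inherited from $u_0$ by $S_\mathcal{G}(t)u_0$, (ii) preserved by the pointwise nonlinearity $v\mapsto |v|^{\rho-1}v$, and (iii) preserved by $v\mapsto \int_0^t S_\mathcal{G}(t-s)v(s)\di s$.

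For (a), properties (i) and (iii) follow from the strict positivity of the kernel \eqref{HK}, while (ii) is trivial since $|v|^{\rho-1}v=v^\rho$ on $\{v\geq 0\}$. Passing to the a.e.\ limit yields $u\geq 0$, and the bound $u(t)\geq S_\mathcal{G}(t)u_0>0$ produces strict positivity whenever $u_0\not\equiv 0$.

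For (b), the main point is that $S_\mathcal{G}(t)$ commutes with every $T=(T_N,T_k)\in\mathcal{O}(N)\times\mathcal{O}(k)$. I would verify this directly from \eqref{HK}: the $x$-variables enter only through the $T_N$-invariant combinations $|x|^2$, $|x_0|^2$, $x\cdot x_0$, and after the substitution $\xi\mapsto T_k\xi$ in the $\R^k$ integral the phase $e^{i\xi\cdot y}$ and the modulus $|\xi|$ reduce to the corresponding quantities at $T_k y$. This gives $K(T_N x,T_N w,T_k(y-z);t)=K(x,w,y-z;t)$. A change of variables in \eqref{solinear} then yields $S_\mathcal{G}(t)[\varphi\circ T]=[S_\mathcal{G}(t)\varphi]\circ T$. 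Since the nonlinearity is pointwise, symmetry propagates along the iteration.

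For (c), the key input is the scaling identity \eqref{symmetry}, which together with changes of variables in \eqref{solinear} and in the Duhamel integral shows that $u_\lambda$ is again a mild solution whose initial datum is $\lambda^{\frac{2}{\rho-1}}u_0(\lambda x,\lambda^2 y)$. Homogeneity of $u_0$ of degree $-\frac{2}{\rho-1}$ with respect to the Grushin dilation $(x,y)\mapsto(\lambda x,\lambda^2 y)$ forces this rescaled datum to coincide with $u_0$. From \eqref{p} one has $\tfrac{2}{\rho-1}=\tfrac{N+2k}{p}$, which is exactly the equality that makes the $L^{(p,\infty)}$-norm invariant under the same dilation; hence $\|u_\lambda\|_X=\|u\|_X\leq 2\varepsilon$, and $u_\lambda$ lies in the uniqueness ball $B_X(2\varepsilon)$ provided by Theorem \ref{globale}(a). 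Uniqueness then forces $u_\lambda=u$.

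The one step I expect to be more than bookkeeping is checking that $u_\lambda$ genuinely satisfies the Duhamel formula: one must rescale both the kernel (using \eqref{symmetry}) and the nonlinear integrand, verifying that the exponent $\frac{2}{\rho-1}$ balances the Jacobian of the Grushin parabolic dilation. The exact cancellation that occurs here is precisely what singles out the critical value $p=\tfrac{N+2k}{2}(\rho-1)$, so any arithmetic slip in this bookkeeping would immediately be visible.
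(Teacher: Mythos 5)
Your proposal is correct. For parts (a) and (b) you follow essentially the same route as the paper: propagate nonnegativity and $\mathcal{A}$-invariance through the Picard iterates using, respectively, the strict positivity of the kernel \eqref{HK} and the identity $K(T_1x,T_1w,T_2(y-z);t)=K(x,w,y-z;t)$ obtained from the substitution $\xi\mapsto T_2\xi$, then pass to the a.e.\ limit. Your additional observation that $u(t)\geq S_\mathcal{G}(t)u_0>0$ once $u\geq 0$ is known is actually a useful refinement: the a.e.\ limit of positive iterates only yields nonnegativity, so this extra line is what delivers the \emph{strict} positivity asserted in the statement. For part (c) you take a genuinely different route. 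The paper proves self-similarity by showing (Lemmas \ref{scalor}(b) and \ref{scalb}(b)) that each Picard iterate is individually invariant under the scaling map, using the kernel homogeneity $K(\lambda x,\lambda x_0,\lambda^2 y,\lambda^2 t)=\lambda^{-(N+2k)}K(x,x_0,y,t)$, and then passes to the limit. You instead show that $u_\lambda$ is again a mild solution with datum $\lambda^{2/(\rho-1)}u_0(\lambda\cdot,\lambda^2\cdot)=u_0$, note that $\|u_\lambda\|_X=\|u\|_X\leq 2\varepsilon$ by the scaling invariance \eqref{norminv} of the critical norm, and conclude $u_\lambda=u$ from the uniqueness in $B_X(2\varepsilon)$ of Theorem \ref{globale}(a). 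The computational core (the kernel homogeneity and the cancellation of the Jacobian against the exponent $\tfrac{2}{\rho-1}=\tfrac{N+2k}{p}$) is the same in both arguments; your version leans on the uniqueness clause of the fixed-point lemma and is the standard argument in the Navier--Stokes and fractional-heat literature, while the paper's version avoids invoking uniqueness at the cost of tracking the scaling through every iterate. Note also that you correctly use the parabolic time scaling $t\mapsto\lambda^2 t$, consistent with the computations in the paper's Lemma \ref{scalb} (the displayed definition \eqref{scalingmap} writes $\lambda t$, which appears to be a typo).
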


The rest of the manuscript is organized as follows. In Section \ref{pre}, we define and gather some properties of the Lorentz and Marcinkiewicz spaces. We also study properties of the semigroup associated with the Grushin operator in Lorentz spaces and the Yamazaki-type estimate. Other key estimates are proved in this section. Section \ref{main} is devoted to the proof of Theorem \ref{globale}, and Section \ref{proofsym} to the proof of the symmetries. We close the paper with some remarks on the solutions to our problem.

\section{Key results}\label{pre}

Most of the notations we use in this paper are standard. Lorentz and Marcinkiewicz spaces $L^{(p,q)}$ and $L^{(p,\infty)}$ are as defined in \cite{bergh, FVR}. The norm in these spaces will be denoted by $\| \cdot \|_{(p,q)}$. Next, we collect the main properties of these spaces. 

Consider a measurable function $f:\mathbb{R}^n\rightarrow \mathbb{R}$.  The $f^{*}$ rearrangement function is defined by
\begin{equation*}\label{rearrang}
	f^{*}(t)=\inf \left\{s>0: m\left(\left\{x \in \mathbb{R}^{n}:|f(x)|>s\right\}\right) \leq t\right\}, \ t>0,
\end{equation*}
where $m$ is the measure $\mathbb{R}^n$-Lebesgue. The averaging function $f^{**}$ is defined by
\begin{equation*}
	f^{* *}(t)=\frac{1}{t} \int_{0}^{t} f^{*}(s) d s, \  t>0.
\end{equation*}
Let $0<p \leqslant \infty, 0<q \leqslant \infty$. The Lorentz space, $L^{(p, q)} (\mathbb{R}^n)$, is the set of all measurable functions $f: \mathbb{R}^{n} \longrightarrow \mathbb{ R}$, such that $\|f\|_{(p, q)}^{*}<\infty$, where
\begin{equation}
	\|f\|^{*}_{(p, q)}=\left\{\begin{array}{ll}
		{\left[\frac{q}{p}\int_{0}^{\infty}\left(t^{\frac{1}{p}} f^{*}(t)\right)^{q} \frac{d t}{t}\right]^{\frac{1}{q}}, \ t>0} & \text { if } 0<p<\infty, 0<q<\infty, \\
		\sup _{t>0} t^{\frac{1}{p}} f^{*}(t), & \text { if } 0<p \leqslant \infty, q=\infty.\\
		
	\end{array}\right.
\end{equation}
Since $\|f\|^{*}_{(p, q)}$ does not satisfy the triangular inequality, $L^{(p,q)}$ is metrizable with the norm $\|f\| _{(p, q)}$ given by
\begin{equation}\label{norma}
	\|f\|_{(p, q)}=\left\{
	\begin{array}{ll}
		\left(\frac{q}{p} \int_{0}^{\infty}\left[t^{\frac{1}{p}} f^{* *}(t)\right]^{q} d t / t \right)^{\frac{1}{q}}, & \text { if } \  1<p<\infty, \ 1 \leq q<\infty; \\
		
		\sup _{t>0} t^{\frac{1}{p}} f^{* *}(t), & \text { if } \  1<p \leq \infty, \ q=\infty.
	\end{array}
	\right.
\end{equation}
When $q=\infty$, the space $L^{(p,\infty)}$ is called the Marcinkiewics space or weak-$L^p$.

\begin{proposition} \label{dual}
	The spaces $L^{(p,q)}$ with the norms $\|f\|_{(p,q)}$ are Banach spaces and
	\begin{equation}\label{h}
		\|f\|^*_{(p,q)}\leq \|f\|_{(p,q)}\leq\frac{p}{p-1}\|f\|^*_{(p,q)}, 
	\end{equation}
	where $1<p\leq \infty$ and $1\leq q \leq \infty$. If, in addition, $(\mathbb{R}^n,\mu)$ be a $\sigma-$finite space, then
	\begin{eqnarray*}
		(L^{(p,q)})^{\ast}&=&L^{(p,\infty)}, \qquad 1<p<\infty, \ 0<q\leq 1,\\
		(L^{(p,q)})^{\ast}&=&L^{(p^{\prime},q^{\prime})}, \qquad 1<p<\infty, \ 0<q< \infty.
	\end{eqnarray*}
	
	\noindent Moreover, given $T\in (L^{(p,q)})^{\ast}$, with $1<p<\infty$ and $0<q\leq\infty$, exists $g$ a measurable function such that 
	\begin{equation}\label{eqdual}
		T(f)=\int_{\mathbb{R}^n}fgd{\mu}.
	\end{equation}
\end{proposition}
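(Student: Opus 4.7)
These are classical properties of Lorentz spaces and the proof essentially combines three well-known ingredients: the Hardy--Littlewood rearrangement inequality $\int_{\mathbb{R}^n} |fg|\,d\mu \le \int_0^\infty f^*(t)g^*(t)\,dt$, the subadditivity $(f+g)^{**}\le f^{**}+g^{**}$ of the maximal rearrangement, and the classical Hardy integral inequality. My plan is to treat the three assertions of the proposition in turn using these tools, reproducing just enough of the argument from \cite{bergh, FVR} to make the paper self-contained.

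For the Banach space structure, I would first check that $\|\cdot\|_{(p,q)}$ is a norm. Homogeneity is immediate from the definitions, while the triangle inequality follows by applying Minkowski's inequality in the weighted $L^q((0,\infty), t^{q/p-1}\,dt)$ to the pointwise bound $(f+g)^{**}(t) \le f^{**}(t) + g^{**}(t)$. The left inequality in \eqref{h} is trivial since $f^*$ is decreasing, whence $f^*(t) \le f^{**}(t)$. The right inequality is exactly Hardy's integral inequality
\begin{equation*}
\left(\int_0^\infty \left(t^{1/p}\cdot\tfrac{1}{t}\int_0^t g(s)\,ds\right)^q \tfrac{dt}{t}\right)^{1/q} \le \tfrac{p}{p-1}\left(\int_0^\infty (t^{1/p} g(t))^q \tfrac{dt}{t}\right)^{1/q},
\end{equation*}
applied to $g=f^*$; the hypothesis $p>1$ is precisely what keeps the constant $p/(p-1)$ finite. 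Completeness then follows by a routine argument: a Cauchy sequence in $\|\cdot\|_{(p,q)}$ is Cauchy in the quasi-norm $\|\cdot\|^*_{(p,q)}$ by the norm equivalence, a Chebyshev-type bound on $f^*$ yields convergence in measure (and, along a subsequence, almost everywhere), and Fatou's lemma applied to the rearrangements identifies the limit in $L^{(p,q)}$.

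For the duality statements, one direction is the Hölder inequality in Lorentz spaces, $|\int fg\,d\mu|\le C\|f\|_{(p,q)}\|g\|_{(p',q')}$, which follows by applying Hardy--Littlewood and then Hölder on $(0,\infty)$ with measure $dt/t$ and weights $t^{1/p}$, $t^{1/p'}$; this shows that every $g\in L^{(p',q')}$ induces a bounded functional on $L^{(p,q)}$ of the form \eqref{eqdual}. The converse is the genuine content: given $T\in (L^{(p,q)})^*$, use that $\mathbb{R}^n$ is $\sigma$--finite to define a set function $\nu(E)=T(\chi_E)$, verify countable additivity in the $L^{(p,q)}$ norm, appeal to Radon--Nikod\'ym to produce a locally integrable $g$ with $T(\chi_E)=\int_E g\,d\mu$, and then extend from simple functions to all of $L^{(p,q)}$ by monotone and dominated convergence. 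The norm of $g$ in $L^{(p',q')}$ (resp.\ $L^{(p,\infty)}$) is computed by testing $T$ on normalized characteristic functions of level sets of $g$, which gives exactly the decreasing rearrangement expression defining $\|g\|_{(p',q')}^*$.

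The main delicate point will be the case $0<q\le 1$, where $L^{(p,q)}$ is only a quasi-Banach space under $\|\cdot\|^*_{(p,q)}$ so the standard associate-space argument must be adjusted; the outcome that the dual is $L^{(p,\infty)}$ uses a dyadic decomposition of level sets to reduce the supremum defining the weak-$L^p$ norm to a comparison with the functional $T$ evaluated on characteristic functions. Since the argument is technical but entirely classical, I would in practice only sketch the Banach space part and the Hölder inequality, and cite \cite{bergh, FVR} for the full duality. This keeps the exposition self-contained on the estimates we actually need in Section \ref{main}.
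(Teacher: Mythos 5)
The paper offers no proof of Proposition \ref{dual} at all: it is stated as a package of classical facts about Lorentz spaces, with definitions and proofs deferred to \cite{bergh, FVR}. So there is nothing to compare your argument against line by line; what I can say is that your sketch follows the standard textbook route and is sound in the cases that the paper actually uses. The left inequality in \eqref{h} from monotonicity of $f^*$, the right one from Hardy's inequality (your constant is correct: with the weight $t^{q/p-q-1}$ the Hardy constant is $q/r$ with $r=q/p'$, i.e.\ exactly $p/(p-1)$), the triangle inequality from $(f+g)^{**}\le f^{**}+g^{**}$ plus Minkowski (legitimate since the norm \eqref{norma} is only defined for $q\ge 1$), and the duality via Hardy--Littlewood and H\"older in one direction and a Radon--Nikod\'ym argument in the other: all of this is the classical proof.

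One genuine caveat. The representation \eqref{eqdual} is asserted for all $0<q\le\infty$, and your final step --- ``extend from simple functions to all of $L^{(p,q)}$ by monotone and dominated convergence'' --- fails at $q=\infty$: simple functions are not dense in $L^{(p,\infty)}$, which does not have absolutely continuous norm, and its Banach dual genuinely contains singular functionals that are not given by integration against any measurable $g$. So your proof, as written, does not establish the $q=\infty$ case; to be fair, that case is doubtful as stated (and the displayed identity $(L^{(p,q)})^{\ast}=L^{(p,\infty)}$ for $0<q\le 1$ is also missing a prime: the associate space is $L^{(p',\infty)}$). The instances the paper actually relies on --- the $q=1$ duality behind $\|\zeta(h)\|_{(p,\infty)}=\sup_{\|\phi\|_{(p',1)}=1}\left|\int \zeta(h)\phi\right|$ in Lemma \ref{nonl}, and $1<q<\infty$ --- are exactly the ones your argument does cover, so for the purposes of this paper your proposal is adequate; but if you intend to prove the proposition verbatim you should either restrict the range of $q$ in \eqref{eqdual} or replace the density argument by one adapted to the K\"othe (associate) dual rather than the full Banach dual.
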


The following remark is a consequence of the equivalence between the norm and the seminorm of the Lorentz space.
\begin{remark}\label{ob}
	Let $1<p<\infty$ and $0<q<\infty$. If $h\in L^{(p,\infty)}(\mathbb{R}^n)$, then
	$\||h|^q\|_{(p,\infty)}\leq \frac{p}{p-1} \|h\|^q_{(pq,\infty)}$. 
\end{remark}  
Plus, we need a Hölder-type estimate.
\begin{proposition}[Generalized Hölder inequality]\label{holder}
	Let $1<p_{1}, p_{2}<\infty$. Let $f \in$
	$L^{\left(p_{1}, d_{1}\right)}$, $g \in L^{\left(p_{2}, d_{2}\right)}$, and $\frac{1}{p_{1}}+\frac{1}{p_{2}}<1$. Then the product  $h=f g$ belongs to  $L^{\left(r, d_{3}\right)}$, where $\frac{1}{r}=\frac{1}{p_{1}}+\frac{1}{p_{2}}$, and
	$d_{3} \geq 1$ is such that $\frac{1}{d_{1}}+\frac{1}{d_{2}} \geq \frac{1}{d_{3}} .$ Moreover,
	$$\|h\|_{\left(r, d_{3}\right)} \leq C(r)\|f\|_{\left(p_{1}, d_{1}\right)}\|g\|_{\left(p_{2}, d_{2}\right)}.$$
	If $r=d_3=1$, then $h\in L^1$  and
	$$\|h\|_1\leq\|f\|_{(p,q_1)}\|g\|_{(p^{\prime},q_2)},$$
	where $p^{\prime}$ is the conjugate of $p$.
\end{proposition}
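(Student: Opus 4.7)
The plan is to reduce Proposition \ref{holder} to two classical facts about the decreasing rearrangement. The first is the O'Neil-type subadditivity bound
\begin{equation*}
(fg)^{*}(t_{1}+t_{2})\leq f^{*}(t_{1})\,g^{*}(t_{2})\quad\text{for all } t_{1},t_{2}>0,
\end{equation*}
which, specialized to $t_{1}=t_{2}=t/2$, yields $(fg)^{*}(t)\leq f^{*}(t/2)\,g^{*}(t/2)$. The second is the Hardy--Littlewood inequality $\int|fg|\,d\mu\leq\int_{0}^{\infty}f^{*}(s)g^{*}(s)\,ds$, which takes care of the endpoint $r=d_{3}=1$. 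Throughout I would work with the seminorm $\|\cdot\|^{*}_{(p,q)}$, which is equivalent to the norm $\|\cdot\|_{(p,q)}$ by \eqref{h} in Proposition \ref{dual}, and convert back at the end, absorbing the constant $p/(p-1)$ into $C(r)$.

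Using $\frac{1}{r}=\frac{1}{p_{1}}+\frac{1}{p_{2}}$, the rearrangement bound rewrites as
\begin{equation*}
t^{1/r}(fg)^{*}(t)\leq 2^{1/r}\,\bigl[s^{1/p_{1}}f^{*}(s)\bigr]_{s=t/2}\cdot\bigl[s^{1/p_{2}}g^{*}(s)\bigr]_{s=t/2}.
\end{equation*}
I would then plug this into the integral defining $\|fg\|^{*}_{(r,d_{3})}$, change variables $s=t/2$ (leaving the measure $ds/s$ invariant), and apply H\"older's inequality on $(0,\infty)$ with respect to $ds/s$ using the conjugate exponents $d_{1}/d_{3}$ and $d_{2}/d_{3}$ (which are indeed conjugate precisely when $\frac{1}{d_{1}}+\frac{1}{d_{2}}=\frac{1}{d_{3}}$). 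This produces $\|fg\|^{*}_{(r,d_{3})}\leq 2^{1/r}\|f\|^{*}_{(p_{1},d_{1})}\|g\|^{*}_{(p_{2},d_{2})}$ in the equality case.

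The more general hypothesis $\frac{1}{d_{1}}+\frac{1}{d_{2}}\geq\frac{1}{d_{3}}$ is then reduced to the equality case by choosing $\tilde d_{3}\in[1,\infty]$ with $\frac{1}{\tilde d_{3}}=\frac{1}{d_{1}}+\frac{1}{d_{2}}\leq\frac{1}{d_{3}}$ and invoking the standard embedding $L^{(r,\tilde d_{3})}\hookrightarrow L^{(r,d_{3})}$ valid because $\tilde d_{3}\leq d_{3}$. The cases where one of $d_{1}$, $d_{2}$, $d_{3}$ equals $\infty$ are handled analogously, replacing the $L^{d_{3}}$-H\"older step by a supremum estimate and using that $\sup_{s}s^{1/p}f^{*}(s)$ is controlled by $\|f\|^{*}_{(p,\infty)}$. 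For the endpoint $r=d_{3}=1$, so that $p_{2}=p_{1}'$, I would start directly from Hardy--Littlewood, split $s^{1/p_{1}}f^{*}(s)\cdot s^{1/p_{1}'}g^{*}(s)$, and apply a single H\"older with measure $ds/s$ using $q_{1}$ and $q_{2}$ (noting $1/q_{1}+1/q_{2}\geq 1$ by hypothesis) to obtain the $L^{1}$ bound.

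The substantive content is the rearrangement inequality itself, which I would quote as standard; the main obstacle is the bookkeeping of constants and the exhaustive case analysis when some of the secondary indices $d_{1},d_{2},d_{3}$ equal $1$ or $\infty$, together with the careful passage between the seminorm (defined via $f^{*}$) and the genuine norm (defined via $f^{**}$) so that the final constant $C(r)$ depends only on $r$ as claimed.
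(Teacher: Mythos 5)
The paper does not prove this proposition at all: it is quoted as a classical result on Lorentz spaces (the O'Neil--Hunt generalized H\"older inequality, available in the references \cite{bergh, FVR} the authors cite), so there is no in-paper argument to compare against. Your reconstruction is the standard proof and is sound: the subadditivity bound $(fg)^{*}(t_{1}+t_{2})\leq f^{*}(t_{1})g^{*}(t_{2})$ specialized at $t_{1}=t_{2}=t/2$, the factorization $t^{1/r}=2^{1/r}(t/2)^{1/p_{1}}(t/2)^{1/p_{2}}$, H\"older in the measure $ds/s$ with exponents $d_{1}/d_{3}$ and $d_{2}/d_{3}$, the nesting $L^{(r,\tilde d_{3})}\hookrightarrow L^{(r,d_{3})}$ for $\tilde d_{3}\leq d_{3}$ to cover the inequality case, and Hardy--Littlewood for the endpoint $r=d_{3}=1$ are all the right ingredients, and the hypothesis $\frac{1}{p_{1}}+\frac{1}{p_{2}}<1$ guarantees $r>1$ so that the seminorm--norm equivalence \eqref{h} is available where you invoke it. Two minor caveats: the constant your argument produces depends on the secondary indices $d_{1},d_{2},d_{3}$ (through the normalization factors $q/p$ and the nesting constants), not only on $r$ as the statement's notation $C(r)$ suggests, though this is harmless for every use made of the proposition in the paper; and in the endpoint case the claimed constant $1$ requires tracking the normalizations in the definition of $\|\cdot\|_{(p,q)}$ rather than absorbing them, but again nothing downstream depends on that sharpness.
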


We can use the interpolation theory in \cite{bergh} and Theorem \ref{lpest} to obtain the behaviour of the semigroup generated by the Grushin operator in Lorentz spaces.

\begin{proposition}\label{lpwest}
	For all $1\leq p \leq \infty$, the semigroup $S_\mathcal{G}(t):L^{(p,s)}(\R^{N+k}) \to L^{(r,s)}(\R^{N+k})$ is a strongly continuous semigroup for $t>0$, provided that $1\leq p \leq r\leq\infty$ and $1<s\leq \infty$. Moreover,
	\begin{equation}\label{semigroupw}
		\|S_\mathcal{G}(t)\varphi\|_{(r,s)}\leq C\|\varphi\|_{(p,s)}t^{-\frac{N+2k}{2}\left(\frac{1}{p}-\frac{1}{r}\right)} .
	\end{equation}
	Moreover, given $1<p<\infty, v \in L^{(p, \infty)}$ and $\varphi \in L^{(p', 1)}$, we have
	\begin{equation}\label{duality}
		\langle S_\mathcal{G}(t) v, \varphi\rangle = \langle v,  S_\mathcal{G}(t) \varphi\rangle .
	\end{equation}
\end{proposition}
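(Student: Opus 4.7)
The plan is to derive the Lorentz-space mapping estimate from the Lebesgue bounds in Theorem \ref{lpest} by real interpolation, to obtain strong continuity by combining the resulting operator-norm bound with a density/semigroup argument, and to establish the duality by exploiting an explicit symmetry of the kernel \eqref{HK}.

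For \eqref{semigroupw}, I would use the real interpolation identity
\begin{equation*}
(L^{p_0}(\R^{N+k}), L^{p_1}(\R^{N+k}))_{\theta, s} = L^{(p,s)}(\R^{N+k}), \qquad \frac{1}{p} = \frac{1-\theta}{p_0} + \frac{\theta}{p_1},
\end{equation*}
valid for $p_0 \neq p_1$, $0<\theta<1$, $1 \leq s \leq \infty$ (see \cite{bergh}). Picking $p_0 < p < p_1$ close to $p$ and setting $\frac{1}{r_i} = \frac{1}{p_i} - \left(\frac{1}{p} - \frac{1}{r}\right)$, Theorem \ref{lpest} yields that $S_\mathcal{G}(t): L^{p_i} \to L^{r_i}$ is bounded with norm $C_i \, t^{-\alpha}$, where $\alpha = \frac{N+2k}{2}\left(\frac{1}{p} - \frac{1}{r}\right)$ is the same for $i=0,1$. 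The real interpolation theorem applied to the pairs $(L^{p_0}, L^{p_1})$ and $(L^{r_0}, L^{r_1})$ then gives $S_\mathcal{G}(t): L^{(p,s)} \to L^{(r,s)}$ with norm bounded by a constant multiple of $C_0^{1-\theta} C_1^\theta \, t^{-\alpha}$, proving \eqref{semigroupw}.

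Strong continuity at $t_0 > 0$ I would handle via the semigroup identity
\begin{equation*}
S_\mathcal{G}(t)\varphi - S_\mathcal{G}(t_0)\varphi = S_\mathcal{G}(t_0/2)\bigl(S_\mathcal{G}(t - t_0/2)\varphi - S_\mathcal{G}(t_0/2)\varphi\bigr),
\end{equation*}
so that after applying \eqref{semigroupw} to the outer $S_\mathcal{G}(t_0/2)$ it suffices to prove that the inner difference vanishes in a (possibly coarser) Lorentz norm as $t \to t_0$. For $1 < s < \infty$, $L^{(p,s)}$ is separable and simple functions are dense, so the problem reduces through Theorem \ref{lpest} to strong $L^p$-continuity at the positive time $t_0/2$. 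For $s = \infty$, which is the main technical nuisance since $L^{(p,\infty)}$ is non-separable, one iterates the bootstrap so that the inner difference lives in a finer Lorentz space $L^{(r',s')}$ with $s' < \infty$, where strong continuity has already been obtained; the outer smoothing factor absorbs the mismatch in exponents.

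Finally, \eqref{duality} follows from Fubini together with the symmetry $K(x,w,y-z;t) = K(w,x,z-y;t)$, which I would read off from \eqref{HK}: the exponent is symmetric in $(x,w)$ thanks to the terms $|x|^2+|w|^2$ and $x\cdot w$, while $K$ is real-valued thanks to the $\xi \mapsto -\xi$ invariance of the $|\xi|$-dependent factor (which turns $e^{i\xi\cdot(y-z)}$ into $e^{-i\xi\cdot(y-z)}$ without affecting the rest of the integrand). Absolute integrability, required to apply Fubini, comes from Proposition \ref{holder} applied to $S_\mathcal{G}(t)|v| \in L^{(p,\infty)}$ against $|\varphi| \in L^{(p',1)}$, whose product lies in $L^1$. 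The principal obstacle in the whole argument is thus isolating strong continuity cleanly in the non-separable Marcinkiewicz case $s = \infty$; everything else is a standard interpolation and duality routine, once the interplay between the pair of exponents on either side is set up so that the smoothing rate $\alpha$ is invariant.
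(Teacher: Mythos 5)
Your proposal is correct and follows essentially the same route as the paper: the estimate \eqref{semigroupw} is obtained by real interpolation of the $L^{p_i}\to L^{r_i}$ bounds from Theorem \ref{lpest} (your choice of $r_i$ keeping the smoothing exponent fixed is a clean special case of the paper's general exponent bookkeeping), and \eqref{duality} is obtained from the $(L^{(p,\infty)})^*=L^{(p',1)}$ duality together with Fubini. You in fact supply two details the paper's proof leaves implicit, namely the strong continuity at positive times (via the semigroup splitting and a bootstrap into a separable Lorentz space for $s=\infty$) and the kernel symmetry $K(x,w,y-z;t)=K(w,x,z-y;t)$ underlying Fubini, both of which are sound.
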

\begin{proof}
	Recall that Lorentz spaces are real interpolation of Lebesgue spaces, that is, $L^{p,q} = (L^{p_0}, L^{p_1})_{\eta,q}$ with $\frac{1}{p} = \frac{1-\eta}{p_0} + \frac{\eta}{p_1}$. Also, Th. \ref{lpest} gives that $S_\mathcal{G}: L^{p_i}(\R^{N+k}) \to L^{r_i}(\R^{N+k})$, $1\leq p_i\leq r_i\leq \infty$, $i=0,1$. From the Marcinkiewicz interpolation theorem (see \cite{bergh}) and Theorem \ref{lpest}, we obtain that 
	\begin{align*}
		\|S_\mathcal{G}(t)\|_{L^{(p,q)}\to L^{(r,q)}} \leq & c \|S_\mathcal{G}(t)\|_{L^{p_0}\to L^{r_0}}^{1-\eta} \|S_\mathcal{G}(t)\|_{L^{p_1}\to L^{r_1}}^\eta  \\ 
		\leq & C \left(t^{-\frac{N+2k}{2}\left(\frac{1}{p_0}-\frac{1}{r_0}\right)}\right)^{1-\eta} 
		\left(t^{-\frac{N+2k}{2}\left(\frac{1}{p_1}-\frac{1}{r_1}\right)}\right)^{\eta} \\
		= & C t^{-\frac{N+2k}{2}\left(\frac{1}{p}-\frac{1}{r}\right)} ,
	\end{align*}
	because $\frac{1}{p} = \frac{1-\eta}{p_0} + \frac{\eta}{p_1}$ and $\frac{1}{r} = \frac{1-\eta}{r_0} + \frac{\eta}{r_1}$.
	
	Identity \eqref{duality} comes from a combination of the duality relation $(L^{(p, \infty)})^* = L^{(p', 1)}$ and Fubini's theorem.
\end{proof}

Now, let us prove a Yamazaki-type lemma (see \cite{Yamazaki}).
\begin{lemma}\label{yam}
	Let $1<p<r<\infty$. Then,
	\begin{equation}\label{yamineq}
		\int_{0}^{\infty} t^{\frac{N+2 K}{2}\left(\frac{1}{p}-\frac{1}{r}\right)-1} \|S_\mathcal{G}(t) \phi\|_{(r, 1)} d s \leq c\|\phi\|_{(p, 1)}
	\end{equation}
\end{lemma}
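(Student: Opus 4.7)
The plan is a real-interpolation argument built on the Peetre K-functional. Fix two auxiliary exponents $p_0, p_1$ with $1 < p_0 < p < p_1 < r$, and set
$$\alpha := \frac{N+2k}{2}\left(\frac{1}{p} - \frac{1}{r}\right), \quad \alpha_i := \frac{N+2k}{2}\left(\frac{1}{p_i} - \frac{1}{r}\right) \ (i = 0, 1),$$
so that $\gamma_0 := \alpha_0 - \alpha > 0$ and $\gamma_1 := \alpha - \alpha_1 > 0$. The Marcinkiewicz interpolation argument used to prove Proposition \ref{lpwest}, applied at second Lorentz index $1$, yields the endpoint estimates
$$\|S_\mathcal{G}(t)\phi\|_{(r,1)} \leq C t^{-\alpha_i} \|\phi\|_{(p_i, 1)}, \quad i=0,1.$$

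For any decomposition $\phi = \phi^0 + \phi^1$ with $\phi^i \in L^{(p_i, 1)}$, linearity and the triangle inequality in $L^{(r,1)}$ give
$$t^\alpha \|S_\mathcal{G}(t)\phi\|_{(r,1)} \leq Ct^{-\gamma_0}\left(\|\phi^0\|_{(p_0, 1)} + t^{\gamma_0 + \gamma_1}\|\phi^1\|_{(p_1, 1)}\right).$$
Taking the infimum over all such decompositions gives
$$t^\alpha \|S_\mathcal{G}(t)\phi\|_{(r,1)} \leq Ct^{-\gamma_0}\, K\bigl(t^{\gamma_0 + \gamma_1}, \phi\bigr),$$
where $K(s, \phi) := K(s, \phi; L^{(p_0, 1)}, L^{(p_1, 1)})$ denotes the K-functional of the indicated Banach couple. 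Dividing by $t$, integrating on $(0,\infty)$ and changing variables $s = t^{\gamma_0 + \gamma_1}$ produces
$$\int_0^\infty t^{\alpha - 1}\|S_\mathcal{G}(t)\phi\|_{(r,1)}\, dt \leq \frac{C}{\gamma_0 + \gamma_1}\int_0^\infty s^{-\theta}\, K(s, \phi)\, \frac{ds}{s},$$
with $\theta := \gamma_0/(\gamma_0 + \gamma_1)$.

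A short computation from the definitions shows $\frac{1}{p} = \frac{1-\theta}{p_0} + \frac{\theta}{p_1}$; hence, by the identification of real interpolation of Lorentz spaces (see \cite{bergh}),
$$\left(L^{(p_0, 1)}, L^{(p_1, 1)}\right)_{\theta, 1} = L^{(p, 1)}$$
with equivalent norms, so the right-hand side is bounded by a constant multiple of $\|\phi\|_{(p, 1)}$, which establishes \eqref{yamineq}. The main obstacle is the $L^{(p_i, 1)} \to L^{(r, 1)}$ endpoint estimate: Proposition \ref{lpwest} is stated only for second Lorentz index strictly greater than $1$, but the same Marcinkiewicz interpolation of the $L^p$--$L^r$ bounds of Theorem \ref{lpest} yields the $s=1$ case; everything else is a routine manipulation of the K-functional and a change of variables.
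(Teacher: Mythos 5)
Your argument is correct, and it proves \eqref{yamineq} by the same overall strategy as the paper --- real interpolation between two auxiliary exponents straddling $p$, fed by the $L^{(p_i,1)}\to L^{(r,1)}$ decay of $S_\mathcal{G}(t)$ --- but the execution is genuinely different. The paper packages the quantity $\xi(t)=t^{\alpha-1}\|S_\mathcal{G}(t)\phi\|_{(r,1)}$ as a sublinear map of $\phi$, shows it is bounded from $L^{(p_i,1)}(\R^{N+k})$ into the weak space $L^{(l_i,\infty)}(0,\infty)$ (using that $t^{-1/l_i}$ has unit weak-$L^{l_i}$ norm), and then invokes the interpolation identities $(L^{(p_1,1)},L^{(p_2,1)})_{\lambda,1}=L^{(p,1)}$ and $(L^{(l_1,\infty)},L^{(l_2,\infty)})_{\lambda,1}=L^1(0,\infty)$ to conclude. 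You instead run the K-functional computation by hand: the pointwise bound $t^{\alpha}\|S_\mathcal{G}(t)\phi\|_{(r,1)}\le Ct^{-\gamma_0}K(t^{\gamma_0+\gamma_1},\phi)$, the change of variables $s=t^{\gamma_0+\gamma_1}$, and the identification $(L^{(p_0,1)},L^{(p_1,1)})_{\theta,1}=L^{(p,1)}$. In effect you are reproving, in this concrete instance, the abstract interpolation theorem for sublinear maps that the paper cites; what this buys is a more self-contained and transparent argument that avoids having to justify interpolation of a nonlinear (sublinear) operator and avoids the weak-type spaces over $(0,\infty)$ altogether, at the cost of being slightly longer. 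Your exponent bookkeeping checks out ($\theta=\gamma_0/(\gamma_0+\gamma_1)$ does give $\tfrac1p=\tfrac{1-\theta}{p_0}+\tfrac{\theta}{p_1}$, and $\int_0^\infty s^{-\theta}K(s,\phi)\,\tfrac{ds}{s}$ is exactly the $(\cdot,\cdot)_{\theta,1}$ norm). You also correctly flag --- and correctly patch --- a point the paper glosses over: Proposition \ref{lpwest} is stated only for second index $s>1$, whereas the proof needs the $s=1$ endpoint $L^{(p_i,1)}\to L^{(r,1)}$; your observation that this follows from real interpolation of the Lebesgue bounds in Theorem \ref{lpest} is the right fix.
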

\begin{proof}
	Set $\xi(t)=t^{\frac{N+2k}{2}\left(\frac{1}{p}-\frac{1}{r}\right)-1} \left\|S_{\mathcal{G}}(t) \phi\right\|_{(r, 1)}$ and $1<p_{1}<p<p_{2}<r$ such that	
	$$
	\frac{1}{p}=\frac{\lambda}{p_{1}}+\frac{1-\lambda}{p_{2}}, \lambda \in(0,1) .
	$$
	From Proposition \ref{lpwest}, we have	
	$$
	\left\|S_{\mathcal{G}}(t) \phi\right\|_{(r, 1)} \leq c t^{-\frac{N+2 k}{2}\left(\frac{1}{p_{j}}-\frac{1}{r}\right)} \|\phi\|_{(p_j,1)} ,\ \phi\in ^{(p_j,1)}(\R^{N+k}), \ j=1,2.
	$$
	Now, define $\frac{1}{l_j} = \frac{N+2K}{2} \left(\frac{1}{p_j}-\frac{1}{p}\right) + 1$. The above estimate gives us
	\begin{equation*}
		\xi(t) \leq c \ t^{l_j^{-1}} \|\phi\|_{(p_j,1)} .
	\end{equation*}
	Since $\|t^{l_j^{-1}}\|_{(p_j,\infty)}=1$, we conclude that $\|\xi(t)\|_{(l_j,\infty)} \leq c \|\phi\|_{(p_j,1)}$. Note that $1=\frac{\lambda}{l_{1}}+\frac{1-\lambda}{l_{2}}$, $L^{(p, 1)}(\R^{N+k})=\left(L^{\left(p_{1}, 1\right)}(\R^{N+k}), L^{\left(p_{2}, 1\right)}(\R^{N+k})\right)_{\lambda, 1}$ and $L^{(1,1)}(0, \infty)=\left(L^{\left(l_{1}, 0\right)}(0, \infty),L^{\left(l_{2}, 0\right)}(0, \infty)\right)_{\lambda, 1}$. Then, real interpolation again gives us $\xi(t): L^{(p, 1)}(\R^{N+k}) \to L^{(1,1)}(0, \infty)$ and 
	$$ \|\xi(t)\|_{(1,\infty)} \leq c \|\phi\|_{(p,1)} ,$$
	that is \eqref{yamineq}.
\end{proof}

For $f(w)=|w|^{\rho-1}w, w\in\R$, $1<\rho<r<\infty$, we define
$$
\zeta(h)(x)=\int_{0}^{\infty} S_\mathcal{G}(s)(f(h))(s) \ ds.
$$
We have the following lemma.
\begin{lemma}\label{nonl}
	Let $h \in L^{\infty}\left((0, \infty) ; L^{(p, \infty)}\right), p=\frac{N+2 k}{2}(\rho-1)$, and $\rho> \frac{N+2k}{N+2(k-1)}$. Then,
	\begin{equation}
		\|\zeta(h)\|_{(p, \infty)} \leq K \sup_{t>0}\|h(t)\|_{(p, \infty)}^{\rho}	.
	\end{equation}
	
	If, in addition, $h \in L^{\infty}\left((0, \infty) ; L^{(p, \infty)} \cap L^{(q, \infty)}\right)$, for $q>\frac{N+2k}{N+2(k-1)}$, then
	\begin{equation}\label{nonl2}
		\|\mathcal{C}(h)\|_{(q,\infty)} \leq C\sup_{t>0} \|h(t)\|^{\rho -1}_{(p,\infty)}\sup_{t>0} \|h(t)\|_{(q,\infty)} .
	\end{equation}
	
\end{lemma}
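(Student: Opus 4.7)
The plan is a duality--Hölder--Yamazaki argument exploiting the scale-invariance at the critical exponent $p=\frac{N+2k}{2}(\rho-1)$. Since $L^{(p,\infty)}$ is the dual of $L^{(p',1)}$ by Proposition \ref{dual}, I would start from
$$\|\zeta(h)\|_{(p,\infty)} \leq C\sup_{\|\varphi\|_{(p',1)}\leq 1}|\langle \zeta(h),\varphi\rangle|,$$
and invoke Fubini together with the adjoint identity \eqref{duality} to obtain
$$\langle \zeta(h),\varphi\rangle = \int_0^\infty \langle f(h(s)), S_\mathcal{G}(s)\varphi\rangle\,ds.$$
Next, I would bound the inner pairing by the generalised Hölder inequality (Proposition \ref{holder}) with $r=1$, $p_1=p/\rho$ and $p_2=p/(p-\rho)$, and control $\|f(h(s))\|_{(p/\rho,\infty)}$ via Remark \ref{ob} to get
$$|\langle \zeta(h),\varphi\rangle| \leq C\sup_{s>0}\|h(s)\|_{(p,\infty)}^\rho \int_0^\infty \|S_\mathcal{G}(s)\varphi\|_{(p/(p-\rho),1)}\,ds.$$

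The decisive observation is that the criticality of $p$ makes the Yamazaki weight trivial: one has
$$\frac{N+2k}{2}\left(\frac{1}{p'}-\frac{p-\rho}{p}\right)-1 \;=\; \frac{N+2k}{2}\cdot\frac{\rho-1}{p}-1 \;=\; 0.$$
Consequently, Lemma \ref{yam} applied to the pair $(p',\,p/(p-\rho))$ delivers the unweighted estimate $\int_0^\infty \|S_\mathcal{G}(s)\varphi\|_{(p/(p-\rho),1)}\,ds \leq C\|\varphi\|_{(p',1)}$, and the first inequality follows by taking the supremum over $\varphi$. The hypothesis $\rho > \frac{N+2k}{N+2(k-1)}$ is precisely the condition $p>\rho$, which places both $p/\rho$ and $p/(p-\rho)$ strictly in $(1,\infty)$ and validates Hölder and Yamazaki.

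For the second inequality I would run the same scheme, this time dualising against $\varphi\in L^{(q',1)}$. The only modification lies in the Hölder factorisation: write $f(h)=|h|^{\rho-1}\cdot h$ and apply Proposition \ref{holder} jointly with Remark \ref{ob} to obtain $\|f(h(s))\|_{(a,\infty)} \leq C\|h(s)\|_{(p,\infty)}^{\rho-1}\|h(s)\|_{(q,\infty)}$, where $\frac{1}{a} = \frac{\rho-1}{p} + \frac{1}{q}$ and hence $\frac{1}{a'}=\frac{1}{q'}-\frac{\rho-1}{p}$. By the very same cancellation, the Yamazaki exponent for the pair $(q',a')$ is again $0$, so Lemma \ref{yam} yields $\int_0^\infty \|S_\mathcal{G}(s)\varphi\|_{(a',1)}\,ds \leq C\|\varphi\|_{(q',1)}$, which produces \eqref{nonl2}. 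The condition $q > \frac{N+2k}{N+2(k-1)}$ is exactly what ensures $a>1$ and $q'<a'$, keeping all indices in the admissible range.

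The main obstacle is the index arithmetic: at each step one must verify that the Hölder and Yamazaki hypotheses of the form $1<\cdot<\cdot<\infty$ are satisfied, and, crucially, that the critical scaling $p=\frac{N+2k}{2}(\rho-1)$ (resp.\ the threshold on $q$) produces Yamazaki exponent $0$ exactly. Once this scale-invariant cancellation is recognised, no time weight appears and only a uniform bound on $\|h(t)\|_{(p,\infty)}$ (together with $\|h(t)\|_{(q,\infty)}$ in the second statement) is needed.
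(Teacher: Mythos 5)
Your proposal is correct and follows essentially the same route as the paper's own proof: duality against $L^{(p',1)}$ (resp. $L^{(q',1)}$) via \eqref{duality} and Fubini, the generalized H\"older inequality together with Remark \ref{ob} to bound $\|f(h)\|_{(p/\rho,\infty)}$ (resp. $\|f(h)\|_{(a,\infty)}$), and the observation that criticality of $p$ makes the Yamazaki weight equal to $t^0$ so that Lemma \ref{yam} closes the estimate. The index verifications you give (in particular that $\rho>\frac{N+2k}{N+2(k-1)}$ amounts to $p>\rho$, and that the threshold on $q$ gives $a>1$) are exactly the checks the paper performs.
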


\begin{proof}
	Let $\phi \in L^{(p, \infty)}$, note that $\frac{p}{p-\rho}>p'=\frac{p}{p-1}$ and $\frac{N+2 k}{2}\left(\frac{1}{p'}-\frac{p-\rho}{p}\right)-1=0$. From \eqref{duality}, the Hölder inequality \ref{holder}, and the Yamazaki-type inequality \eqref{yamineq}, we have
	\begin{align*}
		\|\zeta(h)\|_{(p, \infty)} = & \sup_{\|\phi\|_{(p', 1)}=1} \left| \int_{\mathbb{R}^{N+K}} \zeta(h) \phi \right|\\
		= & \sup_{\|\phi\|_{(p', 1)}=1} \left| \int_{0}^{\infty} \int_{\mathbb{R}^{N+k}} (S_\mathcal{G}(s) f(h))(s) \phi(z) \ dz \ ds\right| \\
		= &  \sup_{\|\phi\|_{(p', 1)}=1} \left| \int_{0}^{\infty} \int_{\mathbb{R}^{N+k}} f(h)(S_\mathcal{G}(s) \phi) \ dz \ ds\right| \\
		\leq & \sup_{\|\phi\|_{(p', 1)}=1} \int_{0}^{\infty}\|f(h)\|_{\left(\frac{p}{\rho}, \infty\right)} \|S_\mathcal{G}(s) \phi\|_{\left(\frac{p}{p - \rho}, 1\right)} \ ds \\
		\leq & C \sup_{\|\phi\|_{(p', 1)}=1} \int_{0}^{\infty} t^{\frac{N+2 k}{2}\left(\frac{1}{p'}-\frac{p-\rho}{p}\right)-1} \|S_\mathcal{G}(s) \phi\|_{\left(\frac{p}{p - \rho}, 1\right)} ds \left(\sup_{t>0} \|h(t)\|^\rho_{(p,\infty)}\right) \\
		\leq & C \sup_{t>0} \|h(t)\|^\rho_{(p,\infty)} .
	\end{align*}
	This proves \eqref{nonl}.
	
	To prove \eqref{nonl2}, we repeat the idea above, rather than using that
	\begin{eqnarray*}
		\|\mathcal{C}(h)\|_{(q,\infty)}\leq \sup _{\|\phi\|_{(p^{\prime},1)}=1}\int_0^{\infty}\|f(h)\|_{(r,\infty)}\|S_{\mathcal{G}}(t)\phi\|_{(r^{\prime},1)}dt
	\end{eqnarray*}
	and
	\begin{eqnarray*}
		\|f(h)\|_{(r,\infty)}\leq\|h\|^{\rho-1}_{(p,\infty)}\|h\|_{(q,\infty)}
	\end{eqnarray*}
	for $\frac{1}{r}=\frac{\rho-1}{p}+\frac{1}{q}$. Now, we apply Yamazaki' inequality with $(p,q)=(q',r')$ to obtain
	\begin{eqnarray*}
		\|\mathcal{C}(h)\|_{(q,\infty)}\leq C\sup_{t>0} \|h(t)\|^{\rho -1}_{(p,\infty)}\sup_{t>0} \|h(t)\|_{(q,\infty)} ,
	\end{eqnarray*}
	since $q'<r'$, and $\frac{N+2K}{2}\left(\frac{1}{q'}-\frac{1}{r'}\right)=\frac{N+2K}{2}\left(\frac{1}{r}-\frac{1}{q}\right)=\frac{N+2K}{2}\frac{(\rho-1)}{p}=1$.
\end{proof}

Let us recall the following contraction principle based on \cite[Lemma 3.9]{FVR}. It is especially useful to control the size of the constants that will appear in some proofs.

\begin{lemma}\label{fixedpoint}
	Let $1<\rho<\infty$ and $X$ be a Banach space with norm $\|\cdot\|$, and $B: X \rightarrow X$ be a map witch satisfies
	$$
	\|B(x)\| \leq K\|x\|^\rho
	$$
	and
	$$
	\|B(x)-B(z)\| \leq K\|x-z\|\left(\|x\|^{\rho-1}+\|z\|^{\rho-1}\right) .
	$$
	
	Let $R>0$ be the unique positive root of the equation $2^\rho K a^{\rho-1}-1=0$. Given $0<\varepsilon<R$ and $y \in X, y \neq 0$, such that $\|y\|<\varepsilon$, there exists a solution $x \in X$ for the equation $x=y+B(x)$ such that $\|x\| \leq 2 \varepsilon$. The solution $x$ is unique in the ball $B_{2 \varepsilon}:=\bar{B}(0,2 \varepsilon)$. Moreover, the solution depends continuously on $y$ in the following sense: If $\|\tilde{y}\| \leq \varepsilon, \tilde{x}=\tilde{y}+B(\tilde{x})$, and $\|\tilde{x}\| \leq 2 \varepsilon$, then
	$$
	\|x-\tilde{x}\| \leq \frac{1}{1-2^\rho K \varepsilon^{\rho-1}}\|y-\tilde{y}\| .
	$$
\end{lemma}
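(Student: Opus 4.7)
The plan is to apply the Banach fixed point theorem to the map $T(x) := y + B(x)$ on the closed ball $\bar{B}(0,2\varepsilon) \subset X$. First I would verify that $T$ sends this ball into itself. For $x$ with $\|x\|\leq 2\varepsilon$, the two hypotheses give
\begin{equation*}
\|T(x)\| \leq \|y\| + \|B(x)\| \leq \varepsilon + K(2\varepsilon)^\rho = \varepsilon\bigl(1 + 2^\rho K \varepsilon^{\rho-1}\bigr).
\end{equation*}
Since $\varepsilon < R$ and the function $a \mapsto 2^\rho K a^{\rho-1}$ is strictly increasing with root $R$, we have $2^\rho K\varepsilon^{\rho-1} < 1$, hence $\|T(x)\| \leq 2\varepsilon$.

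Next I would check contraction. For $x,z \in \bar B(0,2\varepsilon)$, the Lipschitz-type hypothesis yields
\begin{equation*}
\|T(x)-T(z)\| = \|B(x)-B(z)\| \leq K\|x-z\|\bigl(\|x\|^{\rho-1}+\|z\|^{\rho-1}\bigr) \leq 2^\rho K\varepsilon^{\rho-1}\|x-z\|,
\end{equation*}
so $T$ is a strict contraction with Lipschitz constant $L := 2^\rho K \varepsilon^{\rho-1} < 1$. Banach's fixed point theorem then produces a unique $x \in \bar B(0,2\varepsilon)$ with $x = T(x) = y + B(x)$, settling existence and uniqueness within the ball.

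Finally, for continuous dependence, I would subtract the two fixed-point equations $x = y + B(x)$ and $\tilde x = \tilde y + B(\tilde x)$ to get $x - \tilde x = (y - \tilde y) + (B(x) - B(\tilde x))$. Taking norms and using the same bound as above (both $x$ and $\tilde x$ lie in $\bar B(0,2\varepsilon)$),
\begin{equation*}
\|x-\tilde x\| \leq \|y-\tilde y\| + L\|x - \tilde x\|,
\end{equation*}
and rearranging gives the claimed estimate $\|x-\tilde x\| \leq (1-L)^{-1}\|y-\tilde y\|$.

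There is no real obstacle here; the only subtle point is the choice of the radius $2\varepsilon$ (rather than some arbitrary multiple of $\varepsilon$), which is dictated precisely by the requirement that both the self-map condition $\varepsilon + K(2\varepsilon)^\rho \leq 2\varepsilon$ and the contraction constant $L = 2^\rho K \varepsilon^{\rho-1} < 1$ reduce to the same inequality $2^\rho K\varepsilon^{\rho-1} < 1$, which is why $R$ is defined as the root of $2^\rho K a^{\rho-1} - 1 = 0$.
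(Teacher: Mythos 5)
Your proof is correct: the self-map and contraction estimates both reduce to $2^\rho K\varepsilon^{\rho-1}<1$, which holds since $\varepsilon<R$, and the continuous-dependence bound follows by subtracting the two fixed-point equations exactly as you do. The paper itself does not prove this lemma but simply cites it from the literature (Lemma 3.9 of the reference on the fractional heat equation), and your argument is the standard Banach fixed-point proof that underlies that cited result.
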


\section{Proof of Theorem \ref{globale}}\label{main}

This section is devoted to the proof of the global existence of solutions for the problem  \eqref{sheatG}-\eqref{sheatG0} in the space $X$ that will be defined below. As it is claimed in the introduction, we will take the initial data in $L^{(p,\infty)}(\R^{N+2k})$, with $ p=\frac{N+2k}{2}(\rho -1).$

\begin{definition}\label{espX}
	We define the space $X$, formed by the functions $u: (0,\infty) \to L^{(p,\infty)}$, such that
	\begin{equation*}
		u \in L^{\infty}((0,\infty);L^{(p,\infty)}) ,
	\end{equation*}
	with the norm $\|u\|_{X}=\sup_{t>0} \|u(t)\|_{(p,\infty)}$.
\end{definition}

Considering this space and the Duhamel principle applied to \eqref{sheatG}--\eqref{sheatG0}, we define mild solutions below.
\begin{definition}\label{solbranda}
	For $u_0 \in L^{(p,\infty) }$, a global mild solution of the initial value problem $(\ref{sheatG})-(\ref{sheatG0})$ is a solution $u \in X$ of the integral equation
	\begin{equation}\label{branda}
		u(t) = S_\mathcal{G}(t)u_0 + \int_{0}^{t} S_\mathcal{G}(t-s) |u|^{\rho-1}u(s) \ ds := S_\mathcal{G}(t)u_0+ B(u)(t) ,
	\end{equation}
	such that $u(t)\rightarrow u_0$, as $t\rightarrow0^+$, in the sense of the distributions.
\end{definition}

From Proposition \ref{holder} and Remark \ref{ob}, we have the following simple and useful Lipschitz property.
\begin{lemma}
	Let $u,v \in X$ and $f$ given by $f(w)=|w|^{\rho-1}w, w\in\R$, for $1<\rho<r<\infty$. Then,
	\begin{equation}\label{desif}
		\||f(u)-f(v)|\|_{(\frac{r}{\rho},\infty)}\leq \rho\| u-v\|_{(r,\infty)}\left[\|u\|^{\rho-1}_{(r,\infty)}+\|v\|^{\rho-1}_{(r,\infty)}\right].
	\end{equation}
\end{lemma}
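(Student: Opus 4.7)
The plan is to reduce the Lorentz-space bound to a pointwise inequality and then apply the generalized H\"older inequality from Proposition \ref{holder} together with Remark \ref{ob}. The key pointwise fact is that the scalar function $f(w)=|w|^{\rho-1}w$ is of class $C^1$ on $\R$ (since $\rho>1$) with $f'(w)=\rho|w|^{\rho-1}$, so by the mean value theorem applied along the segment joining $u(x)$ and $v(x)$,
\begin{equation*}
|f(u)-f(v)| \le \rho \max(|u|,|v|)^{\rho-1}|u-v| \le \rho\bigl(|u|^{\rho-1}+|v|^{\rho-1}\bigr)|u-v|
\end{equation*}
holds pointwise almost everywhere.

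Next I would take the $L^{(r/\rho,\infty)}$-quasinorm of both sides and apply the triangle inequality to split the right-hand side into the sum of $\||u|^{\rho-1}|u-v|\|_{(r/\rho,\infty)}$ and the analogous term with $v$. For each of these I would invoke the generalized H\"older inequality in Proposition \ref{holder} with the exponents
\begin{equation*}
\frac{1}{r/\rho}=\frac{\rho-1}{r}+\frac{1}{r},
\end{equation*}
which is admissible because $\rho<r$ (so both $r/(\rho-1)$ and $r$ are finite and larger than $1$, and the sum of their reciprocals is less than $1$). This yields
\begin{equation*}
\bigl\||u|^{\rho-1}|u-v|\bigr\|_{(r/\rho,\infty)} \le C\,\bigl\||u|^{\rho-1}\bigr\|_{(r/(\rho-1),\infty)}\,\|u-v\|_{(r,\infty)},
\end{equation*}
and likewise for $v$.

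Finally I would apply Remark \ref{ob} with $q=\rho-1$ and base exponent $r/(\rho-1)$ to get $\||u|^{\rho-1}\|_{(r/(\rho-1),\infty)}\le C\|u\|_{(r,\infty)}^{\rho-1}$, and similarly for $v$, which combined with the previous displays gives exactly the claimed inequality (up to absorbing the universal constants from H\"older and Remark \ref{ob} into the $\rho$ factor, or more honestly, producing a constant $C(\rho,r)$ in place of $\rho$; the important feature, used later in the contraction argument via Lemma \ref{fixedpoint}, is the factorization $\|u-v\|_{(r,\infty)}(\|u\|_{(r,\infty)}^{\rho-1}+\|v\|_{(r,\infty)}^{\rho-1})$). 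The main obstacle is therefore merely bookkeeping of constants rather than any deep estimate; the only subtle point is checking that the H\"older triple of exponents is admissible, which is where the hypothesis $\rho<r$ is used.
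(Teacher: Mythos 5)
Your proposal is correct and follows exactly the route the paper indicates (the lemma is stated without proof as a consequence of Proposition \ref{holder} and Remark \ref{ob}): the mean-value-theorem pointwise bound, H\"older with $\frac{\rho}{r}=\frac{\rho-1}{r}+\frac{1}{r}$, and Remark \ref{ob} with $q=\rho-1$. Your caveat that the resulting constant is really $C(\rho,r)$ rather than exactly $\rho$ is a fair observation, but it is immaterial for the later contraction argument, which only uses the factorized form of the right-hand side.
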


\begin{proof}[Proof of Theorem \ref{globale} (a).]
	We recall \eqref{branda} and define $\Psi: X \rightarrow X$ by $$\Psi(u)(t)=S_\mathcal{G}(t)u_0+B(u)(t) .$$ 
	Let $B_{X}(R)$ be the closed ball for radius $R>0$ as in Lemma \ref{fixedpoint}, and centered at the origin: $B_{X}(R):=\{w\in X:\|w\|_X\leq R\}$. We will apply Lemma \ref{fixedpoint}.

	Initially, we see that it follows from Proposition \ref{lpwest} that there is $C'>0$ such that
	\begin{equation*}
		\|S_\mathcal{G}(t) u_0\|_{(p,\infty)}\leq C' \|u_0\|_{(p,\infty)} ,
	\end{equation*}
	for all $t>0$. For a sufficiently small $\delta_p>0$, we conclude that 
	\begin{equation}\label{desiB}
		\|S_\mathcal{G}(t) u_0\|_{(p,\infty)}\leq \varepsilon ,
	\end{equation}
	where $0<\varepsilon<R$. 
	
	Now, as in \cite{FVR}, we define $h(s,\cdot) = u(t-s,\cdot)$, if $0\leq s\leq t$, and $h(s,\cdot)=0$ otherwise. Then, $B(u)=\zeta(h)$. Lemma \ref{nonl} then yields
	\begin{equation}
		\|B(u)(t)\|_{(p,\infty)} \leq  K_p \left(\|u\|_{(p,\infty)}\right)^{\rho} ,
	\end{equation}
	for some $K_p>0$. Before using Lemma \eqref{fixedpoint}, we recall that since $f:\R\to\R$ is a Borel measurable function and $u$ is measurable by assumption, then $t\mapsto f(u(t))$ also is a measurable function. Hence, the measurability of $t\mapsto B(u(t))$ follows from the proof of the vectorial Young inequality (see e.g. \cite{Arendt-B-Hi-N-01}). At this point, notice that $R>0$ chosen as in Lemma \ref{fixedpoint} depends on this $K_p>0$.

	Analogously, for $u, v \in B_{X}(R)$, we can use \eqref{desif} and Lemma \ref{nonl} to estimate
	\begin{equation}\label{psicont1}
		\|(B(u) - B(v))(t)\|_{(p,\infty)} \leq K \sup _{t>0} \left(\| u-v\|_{(p,\infty)} \left[\|u\|^{\rho-1}_{(p,\infty)}+\|v\|^{\rho-1}_{(p,\infty)}\right]\right).
	\end{equation}
	We are ready to apply Lemma \ref{fixedpoint}, which gives us a unique $u\in B_X(2\varepsilon)$ that is a solution of the integral equation $u=\Psi(u)$.

	It remains to show that the solution $u(t)\rightarrow u_0$, as $t\rightarrow 0^+$, in the sense of the distributions. Let $\varphi\in C^\infty_0(\R^{N+k})$. Since $C^\infty_0 \subset L^{(p',1)}$ (not densely injected), then
	\begin{equation*}
		|\langle S_\mathcal{G}(t)u_0-u_0,\varphi\rangle| =|\langle u_0,S_\mathcal{G}(t)\varphi-\varphi\rangle|
		\leq \|u_0\|_{(p,\infty)}\|S_\mathcal{G}(t)\varphi-\varphi\|_{(p^{\prime},1)}\rightarrow 0 .
	\end{equation*}

	Similarly, by Fubini's theorem and Hölder's inequality 
	\begin{equation*}
		\begin{aligned}
			|\langle B(u),\varphi\rangle| & \leq \int^t_0 \||u|^{\rho-1}u\|_{(\frac{p}{\rho},\infty)}ds\| S_\mathcal{G}(t-s)\varphi\|_{(\frac{p}{p-\rho},1)}\\
			& \leq C t  \|\varphi\|_{\frac{p}{p-\rho},1} \sup_{t>0} \|u\|_r^{\rho}\
			\rightarrow 0,
		\end{aligned}
	\end{equation*}
	as $t\rightarrow 0^+$. Therefore, $u(t)\rightarrow u_0$, as $t\rightarrow 0^+$, in the sense of the distributions.
	
	Furthermore, the last inequality Lemma \ref{fixedpoint} guarantees that, if $u,v\in$ are mild solutions of \eqref{sheatG}--\eqref{sheatG0} with initial conditions $u_0, v_0$, respectively, then 
	\begin{equation*}
		\|u(t) - v(t)\|_{(p,\infty)} \leq \frac{1}{1-2^\rho K_p \varepsilon^{\rho-1}} \|u_0 - v_0\|_{(p,\infty)} .
	\end{equation*}
\end{proof}

\begin{proof}[Proof of Theorem \ref{globale} (b).]
	Let $u_0\in L^{(p,\infty)}\cap L^{(q,\infty)}$ implies $u \in L^{\infty}(0,\infty; L^{(q,\infty)})$. Define Picard's sequence
	$$u_1=S_{\mathcal{G}}(t)u_0, \ \  \mbox{and}\ \ u_{k+1}=u_1+B(u_k).$$ 
	We know that $u_k\rightarrow u$ in $L^{\infty}(0,\infty; L^{(p,\infty)})$, 
	$$\|u_1\|_{(q,\infty)}\leq \|u_0\|_{(q,\infty)} ,$$ 
	and 
	$$\|u_{k+1}\|_{(q,\infty)}\leq C\|u_0\|_{(q,\infty)}+\|B(u_k)\|_{(q,\infty)} . $$
	By part (a), we have that, for all $k\in\N$,
	$$\|u_{k+1}\|_{(p,\infty)}\leq 2\varepsilon. $$
	From \eqref{nonl2}, 
	\begin{eqnarray*}
		\|B(u_{k+1})\|_{(q,\infty)}\leq K_p\sup_{t>0} \|u_k(t)\|^{\rho -1}_{(p,\infty)}\sup_{t>0} \|u_k(t)\|_{(q,\infty)} \leq K_p(2\varepsilon)^{\rho-1} \|u_k(t)\|_{(q,\infty)} .
	\end{eqnarray*}
	for $\|u_0\|_{(p,\infty)}<\delta_p$. For a possible smaller $\delta_p$, we have $a:=K_p(2\varepsilon)^{\rho-1}<1$. Then
	\begin{eqnarray}
		\|u_{k+1}\|_{(q,\infty)} &\leq& C\delta_q+K_p^{\rho-1}\|u_k\|_{(q,\infty)} \nonumber\\
		%&\leq&C\delta_q+K_p^{\rho-1}(C\delta_q+K_p^{\rho-1}\|u_{k-1}\|_{(q,\infty)})\\
		&\leq& C\delta_q(1+a+a^2+\cdots +a^k)\\
		&=& \frac{C \delta_q}{1-a} . \label{bounduk}
	\end{eqnarray}
	Let us see that $\{u_k\}$ is a Cauchy sequence in $L^{(q,\infty)}$. Define $v_k=u_{k+1}-u_k$, Then, \eqref{nonl2} again will yield
	\begin{eqnarray*}
		\|v_k\|_{(q,\infty)}&=&\|B(u_k)-B(u_{k-1})\|_{(q,\infty)}\\
		&=&\left\|\int^{\infty}_0S_{\mathcal{G}}(s)[f(h_k)-f(h_{k-1})]ds\right\|_{(q,\infty)}\\
		&\leq& K_p\sup_{t>0}\|f(u_k)-f(u_{k-1})\|_{(r,\infty)}\\
		&\leq& K_p \sup_{t>0}\left(\|u_k\|_{(p,\infty)}^{\rho-1}+\|u_{k-1}\|_{(p,\infty)}^{\rho-1}\right)\sup_{t>0}\|v_{k-1}\|_{(q,\infty)} .
	\end{eqnarray*}
	Now, if $\delta_q>0$ is sufficiently small, the above estimate and \eqref{bounduk} ensure that $\|v_k\|_{(q,\infty)} \rightarrow0$, as $k\rightarrow \infty$. Therefore, $\{u_k\}$ converges in $L^{(q,\infty)}$ to a function that equals $u$  by the uniqueness of the limit in the sense of distributions. This proves that	$u \in L^\infty((0,\infty); L^{(p,\infty)}(\R^{N+k}) \cap L^{(q,\infty)}(\R^{N+k}))$. 
	
	In this case, by interpolation, 
	\begin{equation}\label{uinLr}
		t\mapsto u(t) \in L^{\infty}(0,\infty,L^r(\mathbb{R}^{N+k}))
	\end{equation}
	with $p<r<q$, because setting $\eta$ such that $\frac{1}{r}=\frac{\eta}{p}+\frac{1-\eta}{q}$, $q_0,q_1=\infty$ and $s=r$, we have 
	\begin{equation*}
		(L^{(p,\infty)},(L^{(q,\infty)})_{(\eta,r)}=L^{(r,r)}=L^r .
	\end{equation*}

\end{proof}

\begin{proof}[Proof of Theorem \ref{globale} (c).]
	For the decay, it needs $1<p<q<\infty$,  $\sigma=\frac{N+2k}{2}\left(\frac{1}{p}-\frac{1}{r}\right)$, and the space
	$Y$, formed by the functions $u\in X$, such that
	\begin{equation*}\label{espX2}
		t\mapsto t^{\sigma}u(t) \in L^{\infty}((0,\infty);L^{(q,\infty)}) ,
	\end{equation*}
	endowed with the norm 
	\begin{equation}
		\|u\|_{Y}= \sup_{t>0}\|u(t)\|_{(p,\infty)} + \sup_{t>0}t^{\sigma}\|u(t)\|_{(q,\infty)}.
	\end{equation}

	Given the estimates obtained in the proof of part (a), to apply Lemma \ref{fixedpoint} with the Banach space $Y$, we only need to estimate $t^{\sigma}u(t)$ in $L^{(q,\infty)})$. From Proposition \ref{lpwest}, we have
	\begin{equation}\label{desiB2}
		\|S_\mathcal{G}(t) u_0\|_{(q,\infty)}\leq \varepsilon ,
	\end{equation}
	and \eqref{nonl2} yields
	\begin{equation*}
		\|B(u)\|_{(q,\infty)}\leq K_p\|u(t)\|_{(p,\infty)} \|u(t)\|_{(q,\infty)},
	\end{equation*}
	or
	\begin{equation*}
		\sup_{t>0} t^\sigma\|B(u)\|_{(q,\infty)}\leq K_p \sup_{t>0} \|u(t)\|_{(p,\infty)} \sup_{t>0}  t^\sigma\|u(t)\|_{(q,\infty)}^{\rho-1} \leq \|u\|_Y^\rho.
	\end{equation*}
	Then, one can apply Lemma \ref{fixedpoint} to find a mild solution of \eqref{sheatG}--\eqref{sheatG0} in the ball $B_Y(2\varepsilon)$. In particular, 
	\begin{equation*}
		\|u(t)\|_{(q,\infty)}\leq Ct^{-\sigma}, \ t>0.
	\end{equation*}

\end{proof}

%%%%%%%%%%%%%%%%%%%%%%%%%%%%%%%%%%%%%%%%%%%%%%%%%%%%%%%%%%%%%%%%%%%%%%%%%%%%%%%%%%%%%
\begin{proof}[Proof of Theorem \ref{globale} (d).] For the uniqueness, take two mild solutions $u,v \in L^\infty((0,\infty); L^{(p,\infty)}(\R^{N+k}) \cap L^{(q,\infty)}(\R^{N+k}))$ for the same Cauchy problem \eqref{sheatG}--\eqref{sheatG0}, with $1<p<q<\infty$. Then,
	\begin{eqnarray*}
		\|u(t) - v(t)\|_{(p,\infty)} & \leq & \int_0^t \| S_\mathcal{G}(t-s) [f(u(s)) - f(v(s))] \| _{(p,\infty)} ds \\
		& \leq & 2C \left(\sup_{t>0} \| u(t) \|_{(q,\infty)}\right)^{\rho-1} \\
		&& \times \int_0^t (t-s)^{-\frac{N+2k}{2}(\frac{1}{r} - \frac{1}{p})}  \|u(s) - v(s) \| _{(p,\infty)} ds ,
	\end{eqnarray*}
	with $\frac{1}{r} = \frac{\rho-1}{q} + \frac{1}{p}$. Then, $\frac{N+2k}{2}(\frac{1}{r} - \frac{1}{p})=\frac{p}{q}<1$. Define $\varphi(t) = \sup_{s\in(0,t)} \|u(s) - v(s)\|_{(p,\infty)}$. By the Singular Gronwall Lemma, $\varphi(t)=0$, for all $t>0$, which means that $u\equiv  v$ in $L^{(p,\infty)}(\R^{N+k})$.
	
\end{proof}

\section{Proof of Theorem \ref{self}} \label{proofsym}

We will split the proof of Theorem \ref{self} into small lemmas.

The Lorentz space $L^{(p,q)}(\R^{N+k})$ enjoys the following scaling relation:
\begin{equation}\label{norminv}
	\|f(\lambda \cdot, \lambda^2\cdot)\|_{(p,q)} = \lambda^{-\frac{N+2k}{p}}\|f\|_{(p,q)} .
\end{equation}
To see this, recall that $m(T(E)) = |\det T| m(E)$ for any Lebesgue measurable set $E$ and $T\in GL_n(\R)$. Thus, consider the dilation $\delta_\lambda(x,y) = (\lambda^{-1}x, \lambda^{-2}y)$, we conclude that 
$$m(\delta_\lambda(E))\leq t \iff m(E)\leq \lambda^{N+2k}t .$$ 
Using this in the rearrangement function and then in the averaging function (see Section \ref{pre}), we have 
$$ (f_\lambda)^*(t) = f^*(\lambda^{N+2k}t)$$
and
$$(f_\lambda)^*(t) = f^*(\lambda^{N+2k}t) ,$$
respectively, where $f_\lambda  = f(\lambda \cdot, \lambda^2\cdot)$. Now, this latter identity inside \eqref{norma} implies \eqref{norminv}.

Identity \eqref{norminv} says that, if we want the initial condition to comply with \eqref{scalingmap}, we must take it in the space $L^{(p,\infty)}(\R^{N+k})$, $p=\frac{N+2k}{2}(\rho-1)$, which is the case of Theorem \ref{globale}. Furthermore, the Marcinkiewicz space $L^{(p,\infty)}(\R^{N+k})$ is invariant by the \eqref{scalingmap}:
$$\|u_\lambda\|_{(p,q)} = \|u\|_{(p,q)} .$$

Consider the following sequence
\begin{equation}\label{picard}
	u_1(x,t)=S_\mathcal{G}(t)u_0(x), \text{ and } u_{k+1}(x,t)=u_1(x,t)+B(u_k), \ k\in\mathbb{N}.
\end{equation}
The proof of Theorem \ref{self} relies on the fact that Picard's sequence \ref{picard} propagates the stated symmetries and positivity of the initial conditions. Indeed, the Theorem will be proved if we show that if the initial condition is the symmetry (resp. is non-negative), then each term of Picard's sequence will also be invariant by that symmetry property (resp. is positive). Therefore, the Theorem will be proved after the lemmas below.
\qed

\begin{lemma}\label{scalor}
	\begin{enumerate}
		\item[(a)] If $\varphi$ is a nonnegative function, then $S_\mathcal{G}(t)\varphi$ is positive.
		\item[(b)] Let $\varphi \in L^{(p, \infty)}$ be a homogeneous function of degree $-\frac{2}{\rho-1}$ in the following sense:
		\begin{equation}\label{homog}
			\varphi(\lambda x, \lambda^2 y) = \lambda^{-\frac{2}{\rho-1}} \varphi(x,y),  \ (x,y)\in\R^{N+k} .
		\end{equation}
		Then $S_\mathcal{G}(t)\varphi$ is invariant by the scaling map \eqref{scalingmap}.
		\item[(c)] If the function $\varphi$ is invariant under the action of $\mathcal{A}$, then $S_\mathcal{G}(t)\varphi$ also is.
	\end{enumerate}
	\eqref{scalingmap}.
\end{lemma}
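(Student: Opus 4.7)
Part (a) is immediate from the integral representation \eqref{solinear} together with the pointwise positivity of the kernel $K$ recorded right after \eqref{symmetry}: if $\varphi\ge 0$ is not identically zero, the integrand $K(x,w,y-z;t)\varphi(w,z)$ is nonnegative and strictly positive on a set of positive measure, so $S_\mathcal{G}(t)\varphi(x,y)>0$ pointwise.

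For part (c), the plan is to exploit the rotational invariance built into the explicit formula \eqref{HK}. The kernel depends on $(x,w)$ only through the $\mathcal{O}(N)$-invariant scalars $|x|^2,\,|w|^2,\,x\cdot w$, while $y$ enters only via $\xi\cdot y$. Fixing $T=(T_1,T_2)\in\mathcal{A}$, I would rewrite $S_\mathcal{G}(t)\varphi(T_1 x,T_2 y)$ and perform the substitutions $w\mapsto T_1 w$, $z\mapsto T_2 z$ (both with unit Jacobian), and, inside the inner $\xi$-integral defining $K$, $\xi\mapsto T_2\xi$ (using that $|T_2\xi|=|\xi|$ and that Lebesgue measure on $\R^k$ is $\mathcal{O}(k)$-invariant). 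After these cancellations the kernel reduces to $K(x,w,y-z;t)$ and the integrand becomes $K(x,w,y-z;t)\varphi(T_1 w,T_2 z)=K(x,w,y-z;t)\varphi(w,z)$ by the assumed symmetry of $\varphi$, giving $S_\mathcal{G}(t)\varphi(T_1 x,T_2 y)=S_\mathcal{G}(t)\varphi(x,y)$.

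For part (b), the key tool is the kernel scaling identity \eqref{symmetry}, combined with the anisotropic dilation $(w,z)=(\lambda w',\lambda^2 z')$, whose Jacobian contributes a factor $\lambda^{N+2k}$. I would write out $(S_\mathcal{G}(\cdot)\varphi)_\lambda(x,y,t)$ explicitly, apply \eqref{symmetry} to express the kernel at the dilated arguments in terms of $K(x,w',y-z';t)$ (producing an overall factor $\lambda^{-(N+2k)}$), make the anisotropic change of variables, and invoke the homogeneity \eqref{homog}, which yields $\varphi(\lambda w',\lambda^2 z')=\lambda^{-2/(\rho-1)}\varphi(w',z')$. The powers of $\lambda$ from the kernel rescaling, the Jacobian, the homogeneity of $\varphi$, and the outer prefactor $\lambda^{2/(\rho-1)}$ in the definition of the scaling map then cancel exactly, leaving $S_\mathcal{G}(t)\varphi(x,y)$.

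The step I expect to require most care is the bookkeeping in (b): one has to verify that the anisotropic scaling of the kernel in the $(x,y)$ variables dovetails with the homogeneity of $\varphi$ so that every power of $\lambda$ cancels, in particular reconciling the time factor in the scaling map with the $t$-scaling coming from \eqref{symmetry}. The orthogonal substitution in (c) is also a little delicate insofar as it requires separately exploiting the $\mathcal{O}(N)$-invariance of the quadratic form in $(x,w)$ and the $\mathcal{O}(k)$-invariance of the inner $\xi$-integral; once these symmetries are separated, however, the argument is a clean change-of-variables computation.
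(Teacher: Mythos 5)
Your proposal is correct and follows essentially the same route as the paper: positivity of the kernel for (a), the parabolic homogeneity of $K$ (which you extract from \eqref{symmetry} and the paper re-derives directly from \eqref{HK}, an equivalent statement) combined with the anisotropic change of variables and \eqref{homog} for (b), and the orthogonal substitutions $w\mapsto T_1w$, $z\mapsto T_2z$, $\xi\mapsto T_2\xi$ in the explicit kernel for (c). Your added caveat in (a) that $\varphi$ should not vanish identically is in fact a small improvement on the paper's statement.
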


\begin{proof}
	The proof of (a) is immediate from the positivity of the heat kernel \eqref{HK}.
	
	To prove (b), we will use expression \eqref{HK}:
	\begin{eqnarray*}
		&& K(\lambda x, \lambda x_0, \lambda^2y,\lambda^2 t) \\
		&=& \frac{1}{(2\pi)^{\frac{N+2k}{2}}}\int_{\mathbb{R}^k}\left(\frac{|\xi|}{\sinh(|\xi|\lambda^2t)}\right)^{\frac{N}{2}} e^{i\xi  \cdot \lambda^2 y- \frac{|\xi|}{2}\left((|\lambda x|^2 + |\lambda x_0|^2)  \coth(|\xi| \lambda^2 t) -2 (\lambda x)\cdot (\lambda x_0) \csch(|\xi|\lambda^2 t) \right)} \di\xi\\
		&=& \frac{\lambda^{-2k-N}}{(2\pi)^{\frac{N+2k}{2}}}\int_{\mathbb{R}^k}\left(\frac{|\xi|}{\sinh(|\xi|t)}\right)^{\frac{N}{2}} e^{i\xi  \cdot y- \frac{|\xi|}{2}\left((|x|^2 + |x_0|^2)  \coth(|\xi| t) -2x\cdot x_0 \csch(|\xi|t) \right)} \di\xi\\
		&=&\lambda^{-2k-N}K(x,x_0,y,t)
	\end{eqnarray*}
	Then, a change of variables in \eqref{solinear}, and the homogeneity \eqref{homog} yield
	\begin{eqnarray*}
		(S_\mathcal{G}(\cdot)\varphi)_\lambda(x,y,t) & = &  \lambda^{\frac{2}{\rho-1}} \int_{\R^{N+k}} K(\lambda x,w,z ,\lambda^2 t) \ \varphi(w, \lambda^2 y - z)  \di w \di z \\
		& = &  \lambda^{N+2k+\frac{2}{\rho-1}} \int_{\R^{N+k}} K(\lambda x,\lambda w, \lambda^2z ,\lambda^2 t) \ \varphi(\lambda w, \lambda^2 y - \lambda^2z)  \di w \di z \\
		& = & \left(S_\mathcal{G}(\cdot)\varphi \right)(x,y,t).
	\end{eqnarray*}
	
	The proof of (c) relies on the fact that $T\in\mathcal{A}$ is written as $T=(T_1,T_2)$ for orthogonal matrices $T_1\in \mathcal{O}(N)$ and $T_2\in\mathcal{O}(k)$. Then $|T_1 x|=|x|$ and $(T_2^t)^{-1}=T_2$. Hence, the change of variables $T_2^t \xi = \xi^\prime$ gives
	\begin{eqnarray*}
		K(x_0,T(x,y),t) & = & \frac{1}{(2\pi)^{\frac{N+2k}{2}}} \int_{\mathbb{R}^k} \left(\frac{|\xi|}{\sinh(|\xi|t)}\right)^{\frac{N}{2}} e^{i\xi  \cdot T_2 y- \frac{|\xi|}{2}\left((|T_1x|^2 + |x_0|^2)  \coth(|\xi| t) -2 T_1x \cdot x_0 \csch(|\xi|t) \right)} \di\xi\\
		& = & K(x, T_1^t x_0,y,t) .
	\end{eqnarray*}
	Then, the change $(w,z) = T(w,z)$ gives $S_\mathcal{G}(t)\varphi(T(x,y)) = S_\mathcal{G}(t)\varphi(x,y)$.
\end{proof}

\begin{lemma}\label{scalb}
	Let $u \in L^\infty(0,\infty; L^{(p, \infty)})$.
	\begin{enumerate}
		\item[(a)] If $u$ is positive, then $B(u)$ is also positive.
		\item[(b)] If $u$ is invariant by the scaling map \eqref{scalingmap}, then so is $B(u)$.
		\item[(c)] If $u$ is invariant under the action of $\mathcal{A}$, then so is $B(u)$.
	\end{enumerate} 
\end{lemma}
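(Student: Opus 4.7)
The plan is to exploit the integral representation
\[
B(u)(t) = \int_{0}^{t} S_{\mathcal G}(t-s)\,\bigl[|u|^{\rho-1}u\bigr](s)\, ds
\]
together with the three corresponding properties already established for the linear semigroup in Lemma \ref{scalor}. For each of (a), (b), (c), the strategy is: first check that the pointwise nonlinearity $f(w)=|w|^{\rho-1}w$ transports the relevant property from $u$ to $|u|^{\rho-1}u$; then apply the linear statement inside the Duhamel integral; finally integrate in $s$.

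For part (a), if $u(\cdot,s)\ge 0$ then $|u|^{\rho-1}u(\cdot,s)=u(\cdot,s)^{\rho}\ge 0$. By Lemma \ref{scalor}(a), $S_{\mathcal G}(t-s)[u(\cdot,s)^{\rho}]\ge 0$ for every $0<s<t$, and the $s$-integral preserves this inequality; positivity of the kernel in fact gives $B(u)(t)>0$ whenever $u$ is not identically zero. Part (c) is also immediate: since $f$ is a pointwise scalar map, $|u|^{\rho-1}u(\cdot,s)$ is invariant under the action of $\mathcal A$ whenever $u(\cdot,s)$ is; Lemma \ref{scalor}(c) then yields that each integrand $S_{\mathcal G}(t-s)[|u|^{\rho-1}u(\cdot,s)]$ is $\mathcal A$-invariant, and invariance is preserved by integration in $s$.

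Part (b) is the main (but still routine) calculation. The goal is the identity $B(u)(\lambda x,\lambda^{2} y,\lambda^{2} t)=\lambda^{-\frac{2}{\rho-1}}B(u)(x,y,t)$, equivalently $B(u)_{\lambda}=B(u)$ when $u_{\lambda}=u$. Starting from
\[
B(u)(\lambda x,\lambda^{2} y,\lambda^{2} t)=\int_{0}^{\lambda^{2} t}\!\!\int_{\R^{N+k}} K\bigl(\lambda x,w,\lambda^{2} y-z,\lambda^{2} t-s\bigr)\,|u|^{\rho-1}u(w,z,s)\,dw\,dz\,ds,
\]
I substitute $s=\lambda^{2}s'$, $w=\lambda w'$, $z=\lambda^{2}z'$, whose Jacobians contribute $\lambda^{2}\cdot\lambda^{N+2k}$. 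Using the key kernel scaling $K(\lambda x,\lambda w',\lambda^{2}(y-z'),\lambda^{2}(t-s'))=\lambda^{-(N+2k)}K(x,w',y-z',t-s')$ from the proof of Lemma \ref{scalor}(b), the $\lambda^{N+2k}$ cancels. From $u_\lambda=u$ we get $u(\lambda w',\lambda^{2} z',\lambda^{2} s')=\lambda^{-\frac{2}{\rho-1}}u(w',z',s')$, so $|u|^{\rho-1}u$ at the scaled point picks up a factor $\lambda^{-\frac{2\rho}{\rho-1}}$. Collecting the remaining powers gives $\lambda^{2-\frac{2\rho}{\rho-1}}=\lambda^{-\frac{2}{\rho-1}}$, which is exactly the required homogeneity; multiplying by $\lambda^{\frac{2}{\rho-1}}$ in the definition of $B(u)_{\lambda}$ yields $B(u)_{\lambda}=B(u)$.

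The only technical point to keep an eye on is the exponent bookkeeping in part (b): one must verify that the three sources of powers of $\lambda$ (the two Jacobians, the kernel scaling, and the self-similarity of $|u|^{\rho-1}u$) combine to the single factor $\lambda^{-\frac{2}{\rho-1}}$, which is forced by the choice $p=\tfrac{N+2k}{2}(\rho-1)$ and the invariance of the $L^{(p,\infty)}$-norm under the scaling map. Once this is done, the three assertions of the lemma follow at once and, combined with Lemma \ref{scalor} and the Picard iteration \eqref{picard}, yield Theorem \ref{self}.
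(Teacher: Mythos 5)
Your proposal is correct and follows essentially the same route as the paper: positivity and $\mathcal{A}$-invariance are transported through the pointwise nonlinearity and then through the semigroup via Lemma \ref{scalor}, and part (b) is the same change of variables $s=\lambda^2 s'$, $w=\lambda w'$, $z=\lambda^2 z'$ combined with the kernel homogeneity $K(\lambda x,\lambda w,\lambda^2(y-z),\lambda^2(t-s))=\lambda^{-(N+2k)}K(x,w,y-z,t-s)$, with the same exponent count $\lambda^{2-\frac{2\rho}{\rho-1}}=\lambda^{-\frac{2}{\rho-1}}$ (the paper merely packages the spatial substitution as a separate identity for $S_\mathcal{G}(\lambda^2 t)\varphi^\rho$ before doing the time integral).
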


\begin{proof}
	The positivity in (a) follows immediately from $f(u)>0$ and Lemma \ref{scalor} (a).
	Notice that, if $\varphi$ is invariant under the dilation \eqref{scalor}, then
	\begin{equation}\label{homop}
		\lambda^{\frac{2\rho}{\rho-1}} \varphi^{\rho}(\lambda x,\lambda^2y) = (\varphi_{\lambda}(x,y))^\rho = \varphi^\rho(x,y) .
	\end{equation}
	Hence,
	
	\begin{eqnarray*}
		[S_\mathcal{G}(\lambda^2t)\varphi^{\rho}](\lambda x, \lambda^2y) & = &   \int_{\mathbb{R}^{N+k}} K(\lambda x,w,\lambda^2y-z,\lambda^2t) \varphi^{\rho}(w,z) \di w \di z\\
		&=& \lambda^{ N +2k } \int_{\mathbb{R}^{N+k}} K(\lambda x, \lambda w,\lambda^2 (y-z),\lambda^2t) \varphi^{\rho}(\lambda w, \lambda^2 z) \di w \di z\\
		&=&  \int_{\mathbb{R}^{N+k}} K( x,  w, y-z,t) \varphi^{\rho}(\lambda w, \lambda^2 z) \di w \di z\\
		&=& \lambda^{ - \frac{2\rho}{\rho-1}} S_\mathcal{G}(t)\varphi^{\rho}(x,y)
	\end{eqnarray*}
	Then,
	\begin{eqnarray*}
		(B(u))_\lambda &=& \left[ \int^{\cdot}_0S_\mathcal{G}(\cdot-s)u^{\rho}(s)ds\right]_{\lambda} \\
		&=& \lambda^{\frac{2}{\rho-1}}\int^{\lambda^2t}_0S_\mathcal{G}(\lambda^2t-s)u^{\rho}(\lambda x, \lambda^2y, s) ds\\
		&=&\lambda^{2+\frac{2}{\rho-1}} \int^t_0 S_\mathcal{G}(\lambda^2(t-s)) u^{\rho}(\lambda x,\lambda^2y, \lambda^2s)ds\\
		&=&\lambda^{2+\frac{2}{\rho-1}- \frac{2\rho}{\rho-1}} \int^t_0S_\mathcal{G}(t-s)u^{\rho}(s)ds\\
		&=&\int^t_0S_\mathcal{G}(t-s)u^{\rho}(s)ds \\
		&=& B(u).
	\end{eqnarray*}
	It proves (b). The invariance of $f(u)$ by $\mathcal{A}$ and Lemma \ref{scalor} (c) finish the proof.
\end{proof}

\section{Final remarks}

We finish the paper with some remarks that are either useful to give a more concrete taste to our results or to give insights to further related research.

\begin{enumerate}
	\item[(a)] Let $T_1$ and $T_2$ be the rotation matrices in $\R^N$ and $\R^k$, respectively. Then, $T=(T_1,T_2)\in\mathcal{A}$. Then , if $u_0(x,y) = u_0(|x|,|y|)$, Theorem \ref{self} guarantees that the solutions are \textit{cylindrical} in the sense that $u(x,y,t) = u(|x|,|y|,t) = u_0(x,y) = u_0(|x|,|y|)$, for all $(x,y,t)\in \R^{N+k}\times(0,\infty)$. 
	
	\item[(b)] If $u_0(x,y) = \varepsilon|x|^{-\frac{2}{\rho-1}} |y|^{-\frac{1}{\rho-1}}$, then, for $\varepsilon>0$ sufficiently small, Theorem \ref{self} gives the existence of a positive, self-similar, and symmetric solution of \eqref{sheatG}--\eqref{sheatG0}. Such a solution belongs to $L^\infty((0,\infty);L^{(p,\infty)})$.

	\item[(c)] Given a initial datum $u_0 \in L^{(p,\infty)}$ satisfying \eqref{homog}, the self-similar solution $u$ given by Th. \ref{self} is stable in the sense of Eq. \eqref{contdep} in Th. \ref{globale}. Nevertheless, the proof of \eqref{contdep} tells us that it can be replaced with
	\begin{equation}
		\|u(t) - v(t)\|_{(p,\infty)} \leq M \|S_{\mathcal{G}}(t)\left( u_0 - v_0 \right) \|_{(p,\infty)} .
	\end{equation}
	Therefore, if in addition 
	$$\|S_{\mathcal{G}}(t)\left( u_0 - v_0 \right) \|_{(p,\infty)} \rightarrow 0,$$
	as $t\rightarrow\infty$, then 
	\begin{equation}
		\lim_{t\rightarrow\infty} \|u(t) - v(t)\|_{(p,\infty)} =0,
	\end{equation}
	that is, each self-similar solution is a basin of attraction. Similar results occur with the heat equation with the fractional Laplacian \cite{FVR} and the Navier-Stokes equation \cite{Can-Kar-05}.
	
	\item[(d)] If a positive classical solution is self-similar and we make $\lambda = t^{-\frac{1}{2}}$ in \eqref{scalingmap}, then $u(x,y,t) = t^{-\frac{1}{\rho-1}} v(\xi, \eta)$, where $\xi = t^{-\frac{1}{2}} x$ and $\eta = t^{-1}y$. Straightforward computations show that $v$ must be a solution of
	\begin{equation}\label{selfstat}
		\Delta_{\mathcal{G}}v - \nabla_{(\xi,\eta)} v \cdot \left(\frac{\xi}{2},\eta\right) - \frac{1}{\rho-1} v + v^\rho .
	\end{equation}
	$\Delta_{\mathcal{G}}$ in \eqref{selfstat} is the operator defined in \eqref{Grushiop} in variables $(\xi,\eta)$ and $\nabla_{(\xi,\eta)}$ is the Euclidean gradient. As
	\begin{equation*}
		\Delta_z v - \nabla_z v \cdot z - \frac{1}{\rho-1} v + v^\rho 
	\end{equation*}
	raised self-interest for being related to positive forward self-similar solutions of $u_t = \Delta u + u^\rho$ (see \cite{Giga86} and references thereof), also \eqref{selfstat} is interesting itself.
	
	\item[(e)] 
	The last comment concerns the nonexistence of global solutions for \eqref{sheatG}--\eqref{sheatG0}. For this, define the following energy functional associated with it:
	\begin{equation}
		E(u)=\frac{1}{2} \int_{\R^{N+k}} |\nabla_{\mathcal{G}} u|^2 \ dx - \frac{1}{p+1} \int_{\R^{N+k}}|u|^{p+1} d x .
	\end{equation}
	Now, define the Sobolev space
	$$H^1_\mathcal{G} = \{u\in L^2(\R^{N+k}): \nabla_{\mathcal{G}}u\in L^2(\R^{N+k})\} ,$$
	endowed with the norm
	$$\| u \| = \|u\|_{L^2} + \|\nabla_{\mathcal{G}}u\|_{L^2} .$$
	Here $\nabla_{\mathcal{G}} = (\nabla_x, |x|\nabla_y)$ denotes the intrinsic gradient of $\Delta_{\mathcal{G}}$.  Let $u_0 \in L^{\infty} \cap H_\mathcal{G}^1$ be such that $E(u_0)<0$. If the solution $u$ belongs to $C((0,T);H_\mathcal{G}^1)$, then we can apply the following Green identity. Let $B_1(R)$ and $B_2(R)$ be balls of radius $R>0$ in $\R^N$ and $R^k$, respectively. For $B(R) = B_1(R)\times B_2(R)\subset \R^{N+k}$, we have
	\begin{equation*}
		\int_{B(R)} u\Delta_{\mathcal{G}}v = \int_{B_2(R)}\int_{S_1(R)} u\frac{\partial v}{\nu_x} + \int_{B_1(R)}\int_{S_2(R)} v\frac{\partial v}{\nu_x} 
		-\int_{B(R)} \nabla_{\mathcal{G}}u\cdot \nabla_{\mathcal{G}} v .
	\end{equation*}
	In particular, for $v=u \in C_0^\infty$, for $R>0$, we get
	\begin{equation*}
		\int_{B(R)} u\Delta_{\mathcal{G}}v = -\int_{B(R)} |\nabla_{\mathcal{G}}u|^2 .
	\end{equation*}
	By density, those identities should work with $v,u\in H_\mathcal{G}^1$. Hence, if the solution $u\in C((0,T);H_\mathcal{G}^1)\cap L^{\rho+1}$, then we can prove that $E(u(\cdot)) \in C([0,T])\cap C^1((0,T))$ and 
	\begin{equation*}
		\frac{d}{dt}E(u(t)) = - \int_{\R^{N+k}} u_t^2 .
	\end{equation*}
	This can be proved by following \cite[Lemma 17.5]{Soup-Quit-07} with the Green identities above. From this on, the Levine concavity method \cite{Levine73} can be applied to show that  $T:=T_{\max }\left(u_0\right)<\infty$.

\end{enumerate}

\subsection*{Acknowledgment}

Most of this research was carried out while Kogoj and Viana visited the Abdus Salam International Centre for Theoretical Physics. We want to thank the ICTP for the hospitality during their short visit and INdAM for the support. A. Viana is partially supported by CNPq under the grant number 308080/2021-1. A.E. Kogoj has been partially supported by the Gruppo Nazionale per l'Analisi Matematica, la Probabilit\`a e le loro Applicazioni (GNAMPA) of the Istituto Nazionale di Alta Matematica (INdAM). 

We thank Giulio Tralli for many valuable discussions.  We thank the Referee for carefully reading the manuscript
and for her/his remarks that greatly improved the presentation of our results.

\vspace{12pt}\noindent
{\bf Conflict of interest statement.}
The authors declare that they have no conflicts of interest.

\bibliographystyle{abbrv}
\bibliography{ref}
\end{document}